\documentclass[11pt,reqno]{amsart}
\usepackage{hyperref,amsmath,amsfonts,mathscinet,amssymb,amsthm}
\usepackage[mathcal]{euscript}
\usepackage{enumitem}
\usepackage{xcolor}
\usepackage{graphics}
\usepackage{mathrsfs}

\usepackage{todonotes}

\usepackage[normalem]{ulem}
\usepackage{soul}
\usepackage{cancel}
\newcommand\Del[1]{{\color{red}\ifmmode\cancel{#1}\else\sout{#1}\fi}}

\setlist[enumerate]{label={\rm(\roman*)}}

\theoremstyle{plain}
\newtheorem{thm}{Theorem}
\newtheorem{proposition}[thm]{Proposition}

\theoremstyle{definition}
\newtheorem{defn}[thm]{Definition}

\newtheorem{rem}[thm]{Remark}
\newtheorem{example}[thm]{Example}

\newtheorem{notation}[thm]{Notation}
\numberwithin{thm}{section}
\numberwithin{equation}{section}
\expandafter\let\expandafter\oldproof\csname\string\proof\endcsname
\let\oldendproof\endproof
\renewenvironment{proof}[1][\proofname]{%
  \oldproof[\bf #1]%
}{\oldendproof}

\setlength{\oddsidemargin}{0cm}
\setlength{\evensidemargin}{0cm}
\setlength{\textwidth}{16cm}
\setlength{\textheight}{23cm}
\setlength{\voffset}{-1cm}

\headheight=12pt

\def\M{\mathscr M}

\def\R{\mathbb R}

\newcommand{\aaa}{\left\langle\alpha\right\rangle}
\newcommand{\ppp}{\left\langle p\right\rangle}
\newcommand{\RR}{\mathscr{R}}
\newcommand{\abs}[1]{\left|{#1}\right|}
\def\esssup{\operatornamewithlimits{ess\,\sup}}

\newtoks\by
\newtoks\paper
\newtoks\book
\newtoks\jour
\newtoks\yr
\newtoks\pages
\newtoks\vol
\newtoks\publ
\newtoks\eds
\newtoks\proc
\newtoks\no
\def\ota{{\hbox{???}}}
\def\cLear{\by=\ota\paper=\ota\book=\ota\jour=\ota\yr=\ota
\pages=\ota\vol=\ota\publ=\ota}
\def\endpaper{\the\by, \textit{\the\paper},
{\the\jour} \textbf{\the\vol} (\the\yr), \the\pages.\cLear}
\def\endbook{\the\by, \textit{\the\book}, \the\publ.\cLear}
\def\endprep{\the\by, \textit{\the\paper}, \the\jour.\cLear}
\def\endproc{\the\by, \textit{\the\paper}, \the\publ, \the\pages.\cLear}
\def\name#1#2{#1 #2}
\def\et{ and }

\begin{document}

\title{Basic functional properties of certain
scale of rearrangement-invariant spaces}

\author[Hana Tur\v cinov\'a]{Hana Tur\v cinov\'a}

\email[H. Tur\v cinov\'a]{turcinova@karlin.mff.cuni.cz}

\address{
 Department of Mathematical Analysis,
 Faculty of Mathematics and Physics,
 Charles University,
 Sokolovsk\'a 83,
 186~00 Praha~8,
 Czech Republic}

\keywords{rearrangement-invariant spaces, maximal non-increasing rearrangement,}

\thanks{This research was supported by the SFG grant of Faculty of Mathematics and Physics, Charles University, grant SVV-2020-260583 and grant no.
P201-18-00580S of the Grant Agency of the Czech Republic.}

\begin{abstract}
Let $X$ be a rearrangement-invariant space over a non-atomic $\sigma$-finite measure space $(\RR,\mu)$ and let $\alpha\in(0,\infty)$. We define the functional
\begin{equation*}
    \|f\|_{X^{\aaa}} = \|((|f|^\alpha)^{**})^{\frac{1}{\alpha}}\|_{\overline{X}(0,\mu(\RR))},
\end{equation*}
in which $f$ is a~$\mu$-measurable scalar function defined on $(\RR,\mu)$ and $\overline{X}(0,\mu(\RR))$ is the representation space of $X$. We denote by $X^{\aaa}$ the collection of all almost everywhere finite
functions $f$ such that $\|f\|_{X^{\aaa}}$ is finite. These spaces recently surfaced in~\cite{CPS-frostman-1} and~\cite{CPS-frostman-2} in connection of optimality of target function spaces in general Sobolev embeddings involving upper Ahlfors regular measures.

We present a variety of results on these spaces including their basic functional properties, their relations to customary function spaces and mutual embeddings and, in a particular situation, a characterization of their associate structures. We discover a~new
one-parameter path of function spaces leading from a Lebesgue space to a Zygmund class and we compare it to the classical one.
\end{abstract}

\date{\today}

\maketitle

\bibliographystyle{alpha}

\section{Introduction}

Function spaces based on symmetrization have proved to be very useful in many branches of mathematics. They are often being used for fine-tuning of more robust classical function spaces originally based on different principles, such as Lebesgue or Orlicz spaces, in situations where these classes of spaces do not provide all answers. There is a vast literature available on the subject, for some recent advances see e.g.~\cite{Ta1,ACS} or~\cite{CPS}.


In the very recent papers~\cite{CPS-frostman-1} and~\cite{CPS-frostman-2}, Sobolev embeddings of arbitrary order have been considered into function spaces on subdomains of $\mathbb R^{n}$ endowed with upper Ahlfors regular measures, called
sometimes in the literature also Frostman measures, whose decay on balls is dominated by a certain power of their radii. The authors approached the problem from a new angle, combining the classical idea of deducing the
highly-dimensional inequalities from one-dimensional ones with a completely new interpolation technique involving a~logarithmically convex combination of two integral operators. Compared to other occurrences of reduction
principles that had been used in earlier work, the piece of information obtained from interpolation in~\cite{CPS-frostman-1} turned out to be somewhat mysterious and it took some further technical constructions to nail down correct target
classes in the Sobolev embeddings. The idea was further developed in~\cite{CPS-frostman-2}, where numerous examples involving Lorentz--Sobolev spaces and Orlicz--Sobolev spaces were presented. Embeddings involving upper Ahlfors regular measures of this generality have a number of important applications in measure theory, harmonic analysis, and theory of function spaces. Their most notorious example is the Hausdorff measure of a subdomain, leading thereby to a
wide variety of general Sobolev trace embedding theorems including many classical results (cf., for instance, \cite{Adams2,Adams3,Mazya543,Mazya548,CP-Trans}).

An interesting phenomenon was spotted in~\cite{CPS-frostman-1}, where it was shown that there is a huge difference between the so-called \textit{fast-decaying measures}, in the description of which the radii of balls are raised to a large power, and the \textit{slow-decaying measures}, for which the same power is small. The threshold between these cases is given by a balance condition depending on the dimension, the power determining the speed of the decay, and the order of the embedding. The case of the slow decaying measures is
the most interesting one and at the same time the most difficult one. In this case, the classical approaches fail and a new type of reduction principle for a Sobolev embedding involving a
slowly-decaying upper Ahlfors regular measure has to be found. Two techniques are developed in~\cite{CPS-frostman-1} to achieve this, each based on a fine work with a certain scale of function structures,
which we shall now describe in detail.

The point of departure is the triple consisting of a $\sigma$-finite non-atomic measure space $(\RR,\mu)$ with $\mu(\RR)\in(0,\infty]$, fixed once for all, a rearrangement-invariant space $X$ containing $\mu$-measurable scalar
functions defined on $\RR$, and a parameter $\alpha\in(0,\infty)$. By $\overline{X}(0,\mu(\RR))$ we denote the representation space of $X$ (which is known to exist, and in fact is even unique - see~\cite[Chapter~2, Theorem~4.10]{BS}).
We define two new structures:
\begin{itemize}
\item the collection $X^{\left\{\alpha\right\}}$ of all $\mu$-measurable and $\mu$-a.e.~finite scalar functions $f$ on $\RR$ for which the value
$\|f\|_{X^{\left\{\alpha\right\}}}$, defined as $\left\|\abs{f}^{\alpha}\right\|_{X}^{\frac{1}{\alpha}}$, is finite, and
\item the collection $X^{\aaa}$ of all $\mu$-measurable and $\mu$-a.e.~finite scalar functions $f$ on $\RR$ for which the value
$\|f\|_{X^{\aaa}}$, defined as $\|((|f|^\alpha)^{**})^{\frac{1}{\alpha}}\|_{\overline{X}(0,\mu(\RR))}$, is finite.
\end{itemize}
The scales $X^{\left\{\alpha\right\}}$ and $X^{\aaa}$ are used in~\cite{CPS-frostman-1} and~\cite{CPS-frostman-2} in two independent constructions in order to describe function spaces appearing as sharp target spaces in trace embedding theorems and in Sobolev embeddings involving upper Ahlfors regular measures. Each scale plays a specific role in an appropriate interpolation scheme. Now, while the spaces $X^{\left\{\alpha\right\}}$ have been treated before (cf.~e.g.~\cite{MaPe}), the spaces $X^{\aaa}$ are completely new. However, the authors of~\cite{CPS-frostman-1} and~\cite{CPS-frostman-2} apply these structures to their purposes without dwelling on their basic functional properties.
The importance of the spaces $X^{\aaa}$, which stems from the mentioned applications, shows that it would be desirable to obtain some more advanced information about them. Our aim in this paper is precisely to carry out such a study.

We will subsequently focus on several important questions concerning these spaces. We approach the spaces through their governing functionals. Given a functional $\varrho\colon\M_+(\RR,\mu)\to[0,\infty]$, where $\M_+(\RR,\mu)$ is the set of all scalar-valued $\mu$-measurable functions with values in $[0,\infty]$, we
define the subset $X=X_{\varrho}$ of $\M_0(\RR,\mu)$ as the collection of all scalar-valued $\mu$-measurable functions on $\RR$ which are finite $\mu$-a.e.~on
$\RR$ and such that $\|f\|_{X}=\varrho(|f|)<\infty$.
Then we define the space $X^{\aaa}$ through the appropriate governing functional, denoted $\varrho^{\aaa}$.

We shall first concentrate on the question when, given $\varrho$ and $\alpha$, the functional $\varrho^{\aaa}$ satisfies individual axioms of a rearrangement-invariant Banach function norm. Surprisingly, this task turns out to be rather complicated. We will then proceed to characterizing the fundamental function of $\varrho^{\aaa}$ (or, which is the same, of $X^{\aaa}$). We will investigate mutual relations of the structures having the form $X^{\aaa}$,
mostly expressed as sharp embeddings, and also their relations to other customary scales of function spaces.
We will also
characterize their associate structures in the special case when the underlying space $X$ is the classical Lorentz space of type $\Lambda$. We will provide non-trivial examples illustrating the general results.

Let us now describe the structure of the paper.

In Section~\ref{S:preliminaries}, we collect all the necessary background material. We fix definitions here, and also most of the notation. Section~\ref{S:basic} is devoted to the study of the basic properties of the
functionals $\varrho^{\aaa}$. We carry out a thorough scrutiny of these functionals within the context of axioms of rearrangement-invariant (quasi)norms. It turns out that the question of the validity of these axioms for   $\varrho^{\aaa}$ is quite non-trivial and contains some rather concealed pitfalls. Having established basic knowledge, we continue to characterize fundamental functions, a property which often contains a key information when dealing with rearrangement-invariant structures. We finally characterize when $\varrho^{\aaa}$ is a rearrangement-invariant~norm and illustrate the general results obtained on examples.

In Section~\ref{S:embeddings}, we focus on mutual relations between the spaces $X^{\aaa}$. The results are mostly formulated either as norm inequalities or continuous embeddings. We point out that the two
above-mentioned scales are linked in an interesting way. While $X^{\aaa}$ is always continuously embedded into $X$, the converse is true if and only if the Hardy averaging operator is bounded on the subcone of non-increasing
functions of the representation space of $X^{\left\{\frac{1}{\alpha}\right\}}$. Among corresponding examples we discover a~new type of a function space.

In Section~\ref{S:bridge}, we present two independent ways of bridging a gap between a Lebesgue space and the related, slightly smaller Zygmund class, both over the same finite measure space. We employ here the new spaces from Section~\ref{S:embeddings} to construct a one-parameter path of function spaces bridging the two mentioned spaces and compare it to the
natural well-known one. We provide a comprehensive information about all possible embeddings between spaces belonging to both scales.

In the last section, we study associate structures of $X^{\aaa}$ when $X$ is a classical Lorentz space. For this purpose, we drop the requirement that $X$
has to be a
rearrangement-invariant~space. This relaxation is possible due to the special technical nature of classical Lorentz spaces. We reduce the problem to a question of quantifying the operator norm of a certain continuous embedding. A key idea of this technique is that a~generic function plays, for a time being, the role of a weight.

\section{Preliminaries} \label{S:preliminaries}

In this section, we collect definitions of objects of our study, fix notation and give a survey of concepts and results from functional analysis and theory of function spaces that will be used in the subsequent parts of the paper. Our standard general reference is \cite{BS} and \cite{PKJF}, where more details can be found.

Let $(\mathscr{R},\mu)$ be a~non-atomic $\sigma$-finite measure space with $\mu(\RR)\in(0,\infty]$. We denote by $\mathscr{M}(\mathscr{R},\mu)$ the set of all $\mu$-measurable functions on $\mathscr{R}$ whose values lie
in $[-\infty,\infty]$, by $\mathscr{M}_+(\mathscr{R},\mu)$ the set of all functions in $\mathscr{M}(\RR,\mu)$ whose values lie in $[0,\infty]$, and by $\mathscr{M}_{0}(\mathscr{R},\mu)$ the set of all functions
in $\mathscr{M}(\mathscr{R},\mu)$ that are finite $\mu$-a.e.~on $\mathscr{R}$. By $\mathscr{M}_+(0,\mu(\mathscr{R}))$ we denote the set of all $m$-measurable functions on the interval $(0,\mu(\mathscr{R}))$ whose values lie in $[0,\infty]$,
where $m$ denotes the one-dimensional Lebesgue measure. We use the symbol $\mathscr{M}_{0}(0,\mu(\mathscr{R}))$ in an analogous way.

For $u\in\mathscr{M}(\mathscr{R},\mu)$, the function $u^*\colon [0,\mu(\RR))\to[0,\infty]$, defined by
\begin{equation*}
    u^{\ast}(t)=\inf\{\lambda\geq 0:\mu(\{x\in \RR:\abs{u(x)}>\lambda\})\leq t\}\quad\text{for $t\in[0,\mu(\RR))$,}
\end{equation*}
is called the \emph{non-increasing rearrangement} of $u$. The function $u^{**}\colon(0,\mu(\mathscr{R}))\rightarrow[0,\infty]$, defined by
$$
u^{**}(t)=\frac{1}{t}\int_0^t u^*(s)\,d s\quad\text{for $t\in(0,\mu(\mathscr{R}))$},
$$
is called the \emph{maximal non-increasing rearrangement} of $u$.

\begin{rem}\label{propsf**}
Assume that $u,v\in\mathscr{M}(\mathscr{R},\mu)$, $\left\{u_n\right\}_{n=1}^\infty \subset \mathscr{M}(\mathscr{R},\mu)$, $\alpha\in(0,\infty)$ and $a\in\R$. Then
\begin{itemize}
\item $u^*$, $u^{**}$ are non-negative and non-increasing on $(0,\mu(\mathscr{R}))$, $u^{**}$ is continuous,
\item $u^{*}\equiv0$ if and only if $u^{**}\equiv0$, which in turn holds if and only if $u=0$ $\mu$-a.e.~on $\mathscr{R}$,
\item if $0\leq v\leq u$ $\mu$-a.e.~on $\mathscr{R}$, then $v^{*}\leq u^{*}$ and $v^{**}\leq u^{**}$, $(au)^{*}=|a|u^{*}$, $(au)^{**}=|a|u^{**}$ and if $0\leq u_n\nearrow u$ $\mu$-a.e.~on $\mathscr{R}$, then $u_n^{**}\nearrow u^{**}$,
\item $u^*(t)\leq u^{**}(t)$ for $t\in(0,\mu(\mathscr{R}))$,
\item $(u^*)^{\alpha}=(|u|^{\alpha})^{*}$.
\end{itemize}
If, moreover, either $u,v\in\mathscr{M}_0(\mathscr{R},\mu)$ or $u,v\in\mathscr{M}_+(\mathscr{R},\mu)$, then
\begin{itemize}
\item $(u+v)^{**}(t)\le u^{**}(t)+v^{**}(t)$ for $t\in(0,\mu(\mathscr{R}))$,
\item $(u+v)^{*}(s+t)\le u^{*}(s)+v^{*}(t)$ for $s,t,s+t\in[0,\mu(\mathscr{R}))$.
\end{itemize}
\end{rem}

\begin{defn}[continuous embedding]
Let $X, Y$ be two quasinormed linear spaces and let $X\subset Y$.
We say that the space X is \emph{continuously embedded} into the space Y, denoted $X\hookrightarrow Y$, if there exists a constant $C\in(0,\infty)$ such that
$$
\left\|u\right\|_{Y}\leq C \left\|u\right\|_{X}\quad \text{for every $u\in \M(\RR,\mu)$}.
$$
\end{defn}

We denote by $\chi_E$ the characteristic function of a set $E$. If $A,B$ are two non-negative quantities, we write $A\lesssim B$ if there exists a positive constant $c$ independent of adequate parameters involved in $A$ and $B$ such that $A\leq c B$. When $A\lesssim B$ and simultaneously $B\lesssim A$, we write $A\approx B$. The convention $0\cdot \infty=0$ applies.

\begin{notation}\label{NOT:x-rho}
If $\varrho\colon \mathscr{M}_{+}(\RR,\mu)\rightarrow [0,\infty]$ is a functional, then we denote by $X_{\varrho}$ the collection of all functions $f\in\M_0(\RR,\mu)$ such that $\|f\|_{X_{\varrho}}=\varrho(|f|)<\infty$.
\end{notation}

If $\varrho\colon \mathscr{M}_{+}(\RR,\mu)\rightarrow [0,\infty]$ and $\sigma\colon \mathscr{M}_{+}(\RR,\mu)\rightarrow [0,\infty]$, then by $X_{\varrho}=X_{\sigma}$ we shall mean that $X_{\varrho}$ and $X_{\sigma}$ are equal in the set-theoretical sense and there exist positive
constants $c,C$ such that $c\varrho(f)\le\sigma(f)\le C\varrho(f)$ for every $f\in \mathscr{M}_{+}(\RR,\mu)$. In such case we shall say that $X_{\varrho}$ and $X_{\sigma}$ are \textit{equivalent}.

\begin{defn}
We say that a functional $\varrho\colon \mathscr{M}_{+}(\RR,\mu)\rightarrow [0,\infty]$ is a \emph{rearrangement-invariant norm} (an \textit{r.i.~norm} for short) if, for all $f$, $g$ and $\{f_n\}^{\infty}_{n=1}$ in $\mathscr{M}_{+}(\RR,\mu)$, for every $\lambda\in[0,\infty)$ and for every $\mu$-measurable subset $E$ of $\RR$, the following six properties are satisfied:

(P1) $\varrho(f)=0 \Leftrightarrow f=0$ $\mu$-a.e.~on $\mathscr{R}$; $\varrho(\lambda f)=\lambda\varrho(f)$; $\varrho(f+g)\leq
\varrho(f)+\varrho(g)$;

(P2) $g\leq f$ $\mu$-a.e.~on $\mathscr{R}$ $\Rightarrow\varrho(g)\leq\varrho(f)$;

(P3) $f_n\nearrow f$ $\mu$-a.e. on $\RR$ $\Rightarrow\varrho(f_n)\nearrow\varrho(f)$;

(P4) $\mu(E)<\infty \Rightarrow \varrho(\chi_{E})<\infty$;

(P5) $\mu(E)<\infty \Rightarrow \int_{E}f \,d\mu\leq C_E \varrho(f)$ for some constant $C_E \in (0,\infty)$ possibly depending on $E$ and $\varrho$ but independent of $f$;

(P6) $\varrho(f)=\varrho(g)$ whenever $f\sp*=g\sp*$.

\medskip

We say that $\varrho\colon \mathscr{M}_{+}(\RR,\mu)\rightarrow [0,\infty]$ is a \textit{rearrangement-invariant quasinorm} (an \textit{r.i.~quasinorm} for short) if it satisfies (P2), (P3), (P4) and (P6), and (P1) replaced by its weakened modification (Q1), where

(Q1) $\varrho(f)=0\Leftrightarrow f=0$ $\mu$-a.e.~on $\mathscr{R}$, $\varrho(\lambda f)=\lambda\varrho(f)$ and there exists $C\in(0,\infty)$ such that
\begin{equation*}
    \varrho(f+g)\le C(\varrho(f)+\varrho(g))\quad\text{for every $f,g\in\mathscr{M}_{+}(\RR,\mu)$.}
\end{equation*}
The infimum over all such constants $C$ is called the \textit{modulus of concavity} of $\varrho$ (cf.~\cite{C2F}).

For an~r.i.~quasinorm $\varrho$ and $X=X_{\varrho}$ we denote $\left\|f\right\|_{X}=\varrho(|f|)$ for $f\in\M(\RR,\mu)$. We then say that $X$ is a \emph{rearrangement-invariant quasi-Banach function space} (a \emph{quasi-r.i.~space} for short) over $(\RR,\mu)$. In case $\varrho$
is an r.i.~norm, we call $X$ a \emph{rearrangement-invariant Banach function space} (an \emph{r.i.~space} for short) over $(\RR,\mu)$.
\end{defn}

It is worth noticing that the expression $\|f\|_X$ is defined for every $f\in\mathscr{M}(\mathscr{R},\mu)$ (although it might be infinite) and that $\|f\|_X<\infty$ if and only if $f\in X$.

A pivotal example of an~r.i.~space is the Lebesgue space.

\begin{defn} Let $p\in(0,\infty]$. Then we define the functional $\varrho_p$ on $\M_+(\RR,\mu)$ by
\begin{equation*}
    \varrho_p(f)=
    \begin{cases}
    \left(\int_{\RR}f\sp p\,d\mu\right)\sp{\frac1p}\ &\textup{if}\ p\in(0,\infty),
        \\
    \mu\text{-}\operatorname{ess\,sup}_{x\in \RR} f(x) &\textup{if}\ p=\infty.
    \end{cases}
\end{equation*}
We shall denote $L^p=X_{\varrho_p}$.
\end{defn}

Note that $L^p$ is a quasi-r.i.~space for every $p\in(0,\infty]$ and it is an~r.i.~space if and only if $p\in[1,\infty]$.

\begin{defn}
Let $\varrho\colon \mathscr{M}_{+}(\RR,\mu)\rightarrow [0,\infty]$ be a~functional satisfying (P6). Then the function $\varphi\colon[0,\mu(\RR))\to[0,\infty]$ given by $\varphi(t)=\varrho(\chi_{E})$, where $E$ is any $\mu$-measurable subset of $\RR$ such that $\mu(E)=t$, is well defined, and will be called the \textit{fundamental function} of $\varrho$. We also say that $\varphi$ is the fundamental function of $X$, where $X=X_{\varrho}$.
\end{defn}

\begin{defn}\label{D:associate}
Let $\varrho\colon\M_+(\RR,\mu)\to[0,\infty]$ be a functional. Then we define another such functional, $\varrho'\colon\M_+(\RR,\mu)\to[0,\infty]$, by
\begin{equation*}
    \varrho'(f) = \sup\left\{\int_{\RR}fg\,d\mu :  g\in\M_+(\RR,\mu),\ \varrho(g)\le 1\right\}.
\end{equation*}
Then $\varrho'$ is called the \textit{associate functional} of $\varrho$. If $X=X_{\varrho}$, then we write $X'=X_{\varrho'}$. If $\varrho$ is an~r.i.~norm, then so is $\varrho'$, and it is called the \textit{associate norm} of $\varrho$ and $X'$ is called the \textit{associate space} of $X$.
\end{defn}

If $p\in[1,\infty]$, then $\varrho_{p}'=\varrho_{p'}$, where $p'$ is given by
\begin{equation*}
    p'=
        \begin{cases}
            \infty &\text{if $p=1$,}
                \\
            \frac{p}{p-1} &\text{if $p\in(1,\infty)$,}
                \\
            1  &\text{if $p=\infty$.}
        \end{cases}
\end{equation*}

If $\varrho\colon\M_+(\RR,\mu)\to[0,\infty]$ is a functional, then the \textit{H\"older inequality}
\begin{equation}\label{E:holder}
    \int_{0}^{\mu(\RR)}fg\,d\mu\le \varrho(f)\varrho'(g)
\end{equation}
holds for every $f,g\in\M_+(\RR,\mu)$ such that $\varrho(f)<\infty$ and $\varrho'(g)<\infty$. In the case when $\varrho$ is an r.i.~norm, the inequality~\eqref{E:holder} holds for any $f,g\in\M_+(\RR,\mu)$.

\begin{rem}
It was shown in~\cite[Theorem~3.1]{GS}, see also~\cite[Remark~2.3]{EKP}, that if a functional $\varrho\colon\M_+(\RR,\mu)\to[0,\infty]$ satisfies at least (P4) and (P5), then $\varrho'$ satisfies (P1)--(P5).
\end{rem}

Assume that $\varrho$ is an r.i.~norm over $(\RR,\mu)$. Then there exists a~uniquely defined r.i.~norm $\overline{\varrho}$ over $((0,\mu(\RR)), m)$ such that $\varrho(f)=\overline{\varrho}(f^{*})$ for every $f\in\M_+(\RR,\mu)$. We will call $\overline{\varrho}$ the \textit{representation norm} of $\varrho$. We denote $\overline{X}=X_{\overline{\varrho}}$ and we write $\|h\|_{\overline{X}}=\overline{\varrho}(|h|)$ for every $h\in\M(\RR,\mu)$.

\begin{defn}
We say that the functions $u,v\in\mathscr{M}(\RR,\mu)$ are in the \textit{Hardy--Littlewood--P\'olya relation}, a fact we denote by $u\prec v$, if, for every $t\in[0,\mu(\RR))$, one has
$$
\int_0^t u^*(s)\,ds\leq\int_0^t v^*(s)\,ds.
$$
\end{defn}

The~\textit{Hardy--Littlewood--P\'olya principle} states that if $u,v\in\mathscr{M}(\RR,\mu)$ satisfy $u\prec v$ and $\varrho\colon\M_+(\RR,\mu)\to[0,\infty]$ is an r.i.~norm, then
$$
\varrho(|u|)\leq\varrho(|v|).
$$

Although Lebesgue spaces play a~primary role in analysis, there are other scales of function spaces that are also of interest. Lebesgue spaces have been generalized in many ways, two of the most important ones being represented by Lorentz spaces and Orlicz spaces. We shall recall definitions and basic properties of these spaces. For proofs and more details see~\cite{BS} or~\cite{PKJF}.

\begin{defn}
Assume that $p,q\in(0,\infty]$. We define the functionals $\varrho_{p,q}$ and $\varrho_{(p,q)}$ on $\M_+(\RR,\mu)$ by
\begin{equation*}
\varrho_{p,q}(f)=
\overline{\varrho_q}\left(s\sp{\frac{1}{p}-\frac{1}{q}}f^*(s)\right)
\quad \hbox{and} \quad \varrho_{(p,q)}(f)=
\overline{\varrho_q}\left(s\sp{\frac{1}{p}-\frac{1}{q}}f^{**}(s)\right).
\end{equation*}
We shall denote $L\sp{p,q}=X_{\varrho_{p,q}}$ and $L\sp{(p,q)}=X_{\varrho_{(p,q)}}$. These spaces (both types) are called \textit{Lorentz spaces}.
\end{defn}

\noindent Obviously (cf. Remark~\ref{propsf**}), one has $L\sp{(p,q)}\hookrightarrow L\sp{p,q}$ for any choice of $p,q$. Moreover,
\begin{equation*}
L\sp{p,q}=L\sp{(p,q)}\quad\textup{if}\ p\in(1,\infty].
\end{equation*}
It will be useful to recall that $L\sp{p,p}=L\sp p$ for every $p\in(0,\infty]$ and that $L\sp{p,q}\hookrightarrow L\sp {p,r}$ whenever $p\in(0,\infty]$ and $0< q\leq r\leq \infty$. If either $p\in(0,\infty)$ and $q\in(0,\infty]$ or $p=q=\infty$, then $L^{p,q}$ is a quasi-r.i.~space. If one of the  conditions
\begin{equation*}
\begin{cases}
p\in(1,\infty),\ q\in[1,\infty],\\
p=q=1,\\
p=q=\infty,
\end{cases}
\end{equation*}
holds, then $L\sp{p,q}$ is equivalent to an~r.i.~space.

\begin{rem}
A quasi-r.i.~space may, or may not, satisfy (P5). A typical example of a quasi-r.i.~space which does not satisfy (P5) is $L^{p}$ with $p\in(0,1)$. On the other hand a typical example of a quasi-r.i.~space which is not an~r.i.~space but satisfies (P5) nevertheless is the Lorentz space $L^{p,q}$ with $p\in(1,\infty)$ and $q\in(0,1)$.
\end{rem}

\begin{defn}
Let $\mu(\RR)<\infty$, let $p,q\in(0,\infty]$ and let $\alpha \in\R$. We define the functionals $\varrho_{p,q;\alpha}$ and $\varrho_{(p,q;\alpha)}$ on $\M_+(\RR,\mu)$ by
\begin{equation*}
\begin{cases}
\varrho_{p,q;\alpha}(f)=
\overline{\varrho_q}\left(s\sp{\frac{1}{p}-\frac{1}{q}}\log \sp
\alpha\left(\tfrac{e\mu(\RR)}{s}\right) f^*(s)\right),\\
\varrho_{(p,q;\alpha)}(f)=
\overline{\varrho_q}\left(s\sp{\frac{1}{p}-\frac{1}{q}}\log \sp
\alpha\left(\tfrac{e\mu(\RR)}{s}\right) f^{**}(s)\right).
\end{cases}
\end{equation*}
We shall denote $L\sp{p,q;\alpha}=X_{\varrho_{p,q;\alpha}}$ and $L\sp{(p,q;\alpha)}=X_{\varrho_{(p,q;\alpha)}}$. We call $L\sp{p,q;\alpha}$ and $L\sp{(p,q;\alpha)}$ \textit{Lorentz--Zygmund spaces}.
\end{defn}

If one of the following
conditions
\begin{equation*}
\begin{cases}
p\in(1,\infty),\ q\in[1,\infty],\\
p=1,\ q=1,\ \alpha \in[0,\infty),\\
p=\infty,\ q=\infty,\ \alpha\in(-\infty,0],\\
p=\infty,\ q\in[1,\infty),\ \alpha\in(-\infty,-\frac1q),
\end{cases}
\end{equation*}
is satisfied, then $L^{p,q;\alpha}$ is equivalent
to an r.i.~space. Lorentz--Zygmund spaces were introduced in~\cite{BR}, further details can be found for instance in~\cite{glz} or~\cite{EGO}. They contain many interesting nontrivial function spaces which have important applications, mainly in various limiting or critical situations, see e.g.~\cite{BW}.

\begin{defn}
Let $p\in(0,\infty]$ and let $w$ be a weight on $(0,\mu(\RR))$ (that is, $w\in\M_+(0,\mu(\RR))$). Then we define the functionals $\varrho_{\Lambda^p(w)}$ and $\varrho_{\Gamma^p(w)}$ on $\M_+(\RR,\mu)$ by
\begin{align*}
\varrho_{\Lambda^p(w)}(f)=
\begin{cases}
\left(\int_0^{\mu(\RR)}f^*(t)^pw(t)\,dt\right)^{\frac{1}{p}}\quad&\text{if $p\in(0,\infty)$},\\
m\text{-}\esssup_{t\in(0,\mu(\RR))}f^*(t)w(t)\quad&\text{if $p=\infty$}
\end{cases}
\end{align*}
and
\begin{align*}
\varrho_{\Gamma^p(w)}(f)=
\begin{cases}
\left(\int_0^{\mu(\RR)}f^{**}(t)^pw(t)\,dt\right)^{\frac{1}{p}}\quad&\text{if $p\in(0,\infty)$},\\
m\text{-}\esssup_{t\in(0,\mu(\RR))}f^{**}(t)w(t)\quad&\text{if $p=\infty$.}
\end{cases}
\end{align*}
We write $\Lambda^p(w)=X_{\varrho_{\Lambda^p(w)}}$ and $\Gamma^p(w)=X_{\varrho_{\Gamma^p(w)}}$. Then $\Lambda^p(w)$ and $\Gamma^p(w)$ are called \textit{classical Lorentz spaces}.
\end{defn}

The question of the (quasi)normability of classical Lorentz spaces is complicated, details can be found either scattered in literature or surveyed in~\cite[Section 10.2]{PKJF}. Note that $\Lambda^{q}(t^{\frac{q}{p}-1})=L^{p,q}$ and $\Gamma^{q} (t^{\frac{q}{p}-1})=L^{(p,q)}$.

A generalization of Lebesgue spaces in a different direction
is provided by \textit{Orlicz spaces}.

\begin{defn}
We say that a function~$A\colon[0,\infty]\to[0,\infty]$ is a \emph{Young
function} if it is left continuous, increasing and convex on $[0,\infty]$, satisfying
$A(0)=0$, and such that $A$ is not constant in $(0,\infty)$. Then
\begin{equation*}
A(t) = \int _0^t a(\tau ) d\tau \quad \text{for $t \in[0,\infty]$},
\end{equation*}
for some non-decreasing, left-continuous function $a\colon [0, \infty )
\to [0, \infty ]$ which is neither identically equal to $0$, nor to
$\infty$. We then define the functional $\varrho_A$ on $\M_+(\RR,\mu)$ by
\begin{equation*}
\varrho_A(f)= \inf \left\{ \lambda \in(0,\infty) :  \int_{\RR} A \left(\frac{f}{\lambda} \right) d\mu \leq 1 \right\}.
\end{equation*}
The corresponding space $L^A=X_{\varrho_A}$ is called the \textit{Orlicz space}.
\end{defn}

In particular, $L^A = L^p$ if $A(t)= t^p$ for some $p \in [1, \infty )$, and
$L^A = L^\infty $ if $A= \infty
\chi_{(1, \infty)}$.

If $\mu(\RR)<\infty$ and either $p\in(1,\infty)$ and $\alpha \in \R$ or $p=1$ and $ \alpha \in[0,\infty)$, then we denote by $L^p(\log L)^{\alpha}$ the Orlicz space
associated with a Young function equivalent to $t^p (\log t)^{\alpha p}$ near infinity. If $\beta\in(0,\infty)$, then we will denote by $\exp L^\beta$ the Orlicz space built upon a Young
function equivalent to $e^{t^\beta}$ near infinity. Let us recall that $L^p(\log L)^{\alpha}=L^{p,p;\alpha}$ and $\exp L^\beta=L^{\infty,\infty,-\frac{1}{\beta}}$.

We will now define a specific functional, built upon a given norm, that will be useful in the sequel.

\begin{defn}
Let $\varrho$ be a rearrangement-invariant norm over $(\RR,\mu)$ and let $\alpha\in(0,\infty)$. We then define the functional $\varrho^{\left\{\alpha\right\}}$ on $\M_+(\RR,\mu)$ by
\begin{equation*}
    \varrho^{\left\{\alpha\right\}}(f) = \varrho(f^{\alpha})^{\frac{1}{\alpha}}.
\end{equation*}
If $X=X_{\varrho}$, then we denote $X^{\left\{\alpha\right\}}=X_{\varrho^{\left\{\alpha\right\}}}$.
\end{defn}

It will be useful to note that if $b\in(0,\infty)$ and $h\in\M_+(0,b)$ is non-increasing, then
\begin{equation}\label{E:rozpulil-pess}
    \int_{0}^{b}h(t)\,dt \le 2\int_{0}^{\frac{b}{2}}h(t)\,dt.
\end{equation}

\section{Basic functional properties of the scale}\label{S:basic}

In this section, we introduce the scale of function spaces which constitutes the main object of study in this paper.

\begin{defn}
Let $\varrho$ be a rearrangement-invariant norm over $(\RR,\mu)$ and let $\alpha\in(0,\infty)$. We then define the functional $\varrho^{\aaa}$ on $\M_+(\RR,\mu)$ by
\begin{equation*}
    \varrho^{\aaa}(f) = \overline{\varrho}\left((\left(f^{\alpha}\right)^{**})^{\frac{1}{\alpha}}\right).
\end{equation*}
If $X=X_{\varrho}$, then we denote $X^{\aaa}= X_{\varrho^{\aaa}}$ in accordance with Notation~\ref{NOT:x-rho}.
\end{defn}

We begin by observing a basic relation between $X$ and $X^{\aaa}$.

\begin{proposition}\label{XaembX}
Let $\varrho$ be a rearrangement-invariant norm over $(\RR,\mu)$ and let $\alpha\in(0,\infty)$. Then $\varrho(f)\le\varrho^{\aaa}(f)$ for every $f\in\M_+(\RR,\mu)$.
\end{proposition}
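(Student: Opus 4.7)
The plan is to chain together the representation formula for $\varrho$, the pointwise inequality $u^* \le u^{**}$, and the homogeneity $(f^\alpha)^* = (f^*)^\alpha$, finishing with monotonicity of $\overline{\varrho}$.

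First I would rewrite the left-hand side using the representation norm: since $\varrho$ is an r.i.~norm and $f\in\M_+(\RR,\mu)$, we have $\varrho(f)=\overline{\varrho}(f^*)$. Next, using the identity $(f^\alpha)^* = (f^*)^\alpha$ from Remark~\ref{propsf**} (valid because $f\ge 0$ and $\alpha>0$), I rewrite $f^* = ((f^\alpha)^*)^{1/\alpha}$. Then the pointwise bound $(f^\alpha)^*(t) \le (f^\alpha)^{**}(t)$ from the same remark, together with the monotonicity of $x\mapsto x^{1/\alpha}$ on $[0,\infty]$, yields
\begin{equation*}
    f^*(t) = \bigl((f^\alpha)^*(t)\bigr)^{\frac{1}{\alpha}} \le \bigl((f^\alpha)^{**}(t)\bigr)^{\frac{1}{\alpha}}\quad\text{for every $t\in(0,\mu(\RR))$.}
\end{equation*}

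Finally I would invoke axiom (P2) of $\overline{\varrho}$ (which is an r.i.~norm on $((0,\mu(\RR)),m)$) to deduce
\begin{equation*}
    \varrho(f) = \overline{\varrho}(f^*) \le \overline{\varrho}\Bigl(\bigl((f^\alpha)^{**}\bigr)^{\frac{1}{\alpha}}\Bigr) = \varrho^{\aaa}(f),
\end{equation*}
completing the argument.

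There is essentially no obstacle here: the statement is a direct consequence of the definitions and the elementary rearrangement identities collected in Remark~\ref{propsf**}. The only point that requires a moment's attention is making sure the raising-to-the-power operations are applied in the correct order, which is why I would phrase the argument as $f^* = ((f^\alpha)^*)^{1/\alpha} \le ((f^\alpha)^{**})^{1/\alpha}$ rather than trying to compare $(f^*)^\alpha$ and $(f^\alpha)^{**}$ directly before inverting.
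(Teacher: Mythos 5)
Your argument is correct and is essentially identical to the paper's own proof: both rewrite $\varrho(f)=\overline{\varrho}(f^*)=\overline{\varrho}(((f^\alpha)^*)^{1/\alpha})$ via the identity $(f^*)^\alpha=(f^\alpha)^*$, then apply the pointwise bound $(f^\alpha)^*\le(f^\alpha)^{**}$ together with (P2) for $\overline{\varrho}$. Nothing is missing.
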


\begin{proof}
Since $\varrho$ is an r.i.~norm, we know that there exists its unique representation norm $\overline{\varrho}$. Thus, using the properties of rearrangements collected in Remark~\ref{propsf**}, elementary inequalities and (P2) for $\overline{\varrho}$, we get, for every $f\in\M(\RR,\mu)$,
\begin{align*}
  \varrho(f) & = \overline{\varrho}(f^{*}) = \overline{\varrho}(((f^{*})^{\alpha})^{\frac{1}{\alpha}}) = \overline{\varrho}(((f^{\alpha})^{*})^{\frac{1}{\alpha}})
    \le \overline{\varrho}(((f^{\alpha})^{**})^{\frac{1}{\alpha}}) = \varrho^{\aaa}(f),
\end{align*}
and the assertion follows.
\end{proof}

Our next aim is to investigate when, for a given r.i.~norm $\varrho$, the functional $\varrho^{\aaa}$ is at least a quasinorm.

\begin{thm}\label{quasinorm}
Let $\varrho$ be a rearrangement-invariant norm over $(\RR,\mu)$ and let $\alpha\in(0,\infty)$. Then $\varrho^{\aaa}$ satisfies \textup{(P2)}, \textup{(P3)}, \textup{(P6)} and \textup{(Q1)}.
\end{thm}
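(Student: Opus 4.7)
The plan is to verify each of the four properties by reducing it to the corresponding property of the representation norm $\overline{\varrho}$ via the structural properties of the maximal rearrangement operation collected in Remark~\ref{propsf**}. Three of the four axioms will be essentially mechanical; the quasi-triangle inequality inside (Q1) is the only substantive step.

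For (P6), if $f^*=g^*$, then by Remark~\ref{propsf**} one has $(f^\alpha)^*=(f^*)^\alpha=(g^*)^\alpha=(g^\alpha)^*$, hence $(f^\alpha)^{**}=(g^\alpha)^{**}$ as functions on $(0,\mu(\RR))$, and the equality $\varrho^{\aaa}(f)=\varrho^{\aaa}(g)$ drops out. For (P2), $g\le f$ forces $g^\alpha\le f^\alpha$, whence monotonicity of $^{**}$ (Remark~\ref{propsf**}) combined with (P2) for $\overline{\varrho}$ does the job. For (P3), from $f_n\nearrow f$ one gets $f_n^\alpha\nearrow f^\alpha$, so $(f_n^\alpha)^{**}\nearrow(f^\alpha)^{**}$ pointwise by Remark~\ref{propsf**}; since $x\mapsto x^{1/\alpha}$ is continuous and increasing, the $1/\alpha$-th powers converge monotonically as well, and (P3) for $\overline{\varrho}$ finishes. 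For the definiteness and homogeneity halves of (Q1), I would use $\varrho^{\aaa}(f)=0\Leftrightarrow((f^\alpha)^{**})^{1/\alpha}=0$ $m$-a.e.\ $\Leftrightarrow(f^\alpha)^{**}\equiv 0$ $\Leftrightarrow f=0$ $\mu$-a.e.\ (the last two equivalences again from Remark~\ref{propsf**}), and the pointwise identity $((\lambda f)^\alpha)^{**}=\lambda^\alpha(f^\alpha)^{**}$ to transfer scalars outside.

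The quasi-triangle inequality is the only piece that requires real work. The key ingredient will be the elementary numerical inequality $(a+b)^\gamma\le c_\gamma(a^\gamma+b^\gamma)$ for $a,b\ge 0$ and $\gamma>0$, with $c_\gamma=\max(1,2^{\gamma-1})$. Applying it with $\gamma=\alpha$ to $f+g$ and then using, in order, monotonicity of $^{**}$ and its subadditivity from Remark~\ref{propsf**}, yields
\begin{equation*}
    ((f+g)^\alpha)^{**}\le c_\alpha\bigl[(f^\alpha)^{**}+(g^\alpha)^{**}\bigr].
\end{equation*}
Taking the $1/\alpha$-th power and applying the same elementary inequality with $\gamma=1/\alpha$ produces
\begin{equation*}
    \bigl(((f+g)^\alpha)^{**}\bigr)^{1/\alpha}\le C_\alpha\Bigl[\bigl((f^\alpha)^{**}\bigr)^{1/\alpha}+\bigl((g^\alpha)^{**}\bigr)^{1/\alpha}\Bigr],
\end{equation*}
with $C_\alpha=c_\alpha^{1/\alpha}\,\max(1,2^{1/\alpha-1})$ a constant depending only on $\alpha$. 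Feeding this into the triangle inequality and positive homogeneity of the genuine norm $\overline{\varrho}$ then gives $\varrho^{\aaa}(f+g)\le C_\alpha[\varrho^{\aaa}(f)+\varrho^{\aaa}(g)]$.

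I do not anticipate a genuine obstacle, only the mild care required to carry out the quasi-triangle step in two sub-cases (the usage of $c_\gamma$ is sharp in different ways for $\gamma\ge 1$ and $\gamma<1$, so both $\alpha\ge 1$ and $\alpha<1$ contribute a factor). Once the numerical inequality is set up, the argument is a transparent composition of Remark~\ref{propsf**} with the norm axioms of $\overline{\varrho}$; I would expect the harder issue of whether $\varrho^{\aaa}$ actually satisfies the full triangle inequality (P1), rather than only (Q1), to be deferred to a later theorem, since it is clearly sensitive to $\alpha$.
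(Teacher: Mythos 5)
Your proposal is correct and follows essentially the same route as the paper: each axiom is reduced to the corresponding axiom of $\overline{\varrho}$ via Remark~\ref{propsf**}, and the quasi-triangle inequality is obtained by applying the power inequality $(a+b)^\gamma\le\max(1,2^{\gamma-1})(a^\gamma+b^\gamma)$ once at level $\alpha$ and once at level $1/\alpha$ together with subadditivity of $^{**}$, which yields the same modulus of concavity $2^{|1/\alpha-1|}$ that the paper derives by splitting into the cases $\alpha\ge1$ and $\alpha<1$.
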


\begin{proof}
First, note that $\varrho^{\aaa}$ obviously satisfies (P6). Since $\varrho$ is an r.i.~norm, we know that there exists its unique representation norm $\overline{\varrho}$. Properties (P2) and (P3) for $\varrho^{\aaa}$ follow readily from their counterparts for $\overline{\varrho}$, using also elementary properties of powers and rearrangements (see Remark~\ref{propsf**}). To prove (Q1) for $\varrho^{\aaa}$, let us first show the positive homogeneity. Let $f\in\M_+(\RR,\mu)$ and $c\ge0$. Using the positive homogeneity of the maximal non-increasing rearrangement mentioned in Remark~\ref{propsf**} and of $\overline{\varrho}$, we have
\begin{align*}
\varrho^{\aaa}(cf)
&=
\overline{\varrho}\left(\left(\left((cf)^\alpha\right)^{**}\right)^{\frac1{\alpha}}\right)
=
c\overline{\varrho}\left(\left(\left(f^\alpha\right)^{**}\right)^{\frac1{\alpha}}\right)
= c \varrho^{\aaa}(f).
\end{align*}
Now let us turn our attention to the subadditivity of $\varrho^{\aaa}$. Let $f,g \in \M_+(\RR,\mu)$. We shall distinguish two cases according to whether $\alpha\in(0,1)$ or $\alpha\in[1,\infty)$ and treat them separately. Assume first that $\alpha\in[1,\infty)$. Then, using the definition of $\varrho^{\aaa}$, elementary inequalities, subadditivity, monotonicity and positive homogeneity of the maximal non-increasing rearrangement, (P1) and (P2) for $\overline{\varrho}$, we get
\begin{align*}
\varrho^{\aaa}(f+g)
& =
\overline{\varrho}\left(\left(\left((f+g)^\alpha\right)^{**}\right)^{\frac1{\alpha}}\right)
   \leq
\overline{\varrho}\left(\left(\left(2^{\alpha-1}(f^\alpha+g^\alpha)\right)^{**}\right)^{\frac1{\alpha}}\right)
    \\
&\leq
2^{1-\frac{1}{\alpha}}\overline{\varrho}\left(\left(\left(f^\alpha\right)^{**}\right)^{\frac1{\alpha}}
+\left(\left(g^\alpha\right)^{**}\right)^{\frac1{\alpha}}\right)
    \\
&\leq
2^{1-\frac{1}{\alpha}}\left(\varrho^{\aaa}(f)+\varrho^{\aaa}(g)\right).
\end{align*}
Now assume that $\alpha\in(0,1)$. Then, using analogous principles, we obtain
\begin{align*}
\varrho^{\aaa}(f+g)
& =
\overline{\varrho}\left(\left(\left((f+g)^\alpha\right)^{**}\right)^{\frac1{\alpha}}\right)
\leq
\overline{\varrho}\left(\left(\left(f^\alpha\right)^{**}+\left(g^\alpha\right)^{**}\right)^{\frac1{\alpha}}\right)
    \\
&\leq
\overline{\varrho}\left(2^{\frac{1}{\alpha}-1}\left(\left(\left(f^\alpha\right)^{**}\right)^{\frac1{\alpha}}+\left(\left(g^\alpha\right)^{**}\right)^{\frac1{\alpha}}\right)\right)
    \\
&\leq
2^{\frac{1}{\alpha}-1}\left(\varrho^{\aaa}(f)+\varrho^{\aaa}(g)\right).
\end{align*}
Altogether, we get, in each case,
\begin{equation*}
    \varrho^{\aaa}(f+g)\leq2^{\left|\frac{1}{\alpha}-1\right|}\left(\varrho^{\aaa}(f)+\varrho^{\aaa}(g)\right).
\end{equation*}
It remains to notice that it follows from Remark~\ref{propsf**} and (P1) for $\overline{\varrho}$ that, for $f\in\M_+(\RR,\mu)$,
\begin{align*}
\varrho^{\aaa}(f)=0\ \Leftrightarrow\
\overline{\varrho}\left(\left(\left(f^\alpha\right)^{**}\right)^{\frac1{\alpha}}\right)=0\
\Leftrightarrow\
 f=0 \quad \mu\text{-a.e.~on $\mathscr{R}$.}
\end{align*}
The proof is complete.
\end{proof}

\begin{rem}
Under the assumptions of Theorem~\ref{quasinorm}, the functional $\varrho^{\aaa}$ may, or may not, satisfy (P4). We shall come back to this question in more detail in Theorem~\ref{T:P4} below.
\end{rem}

Our next aim is to point out that much more can be said when $\alpha\ge 1$. To this end we shall need the following useful general principle of independent interest.

\begin{proposition}\label{P:maly}
Let $h\colon(0,\mu(\RR))\rightarrow[0,\infty)$ be right-continuous and non-increasing. Then, for every fixed $t\in(0,\mu(\RR))$, the operator
$$
u\longmapsto\int_0^{t} u^*(s)h(s)\,d s
$$
is subadditive both on $\mathscr{M}_+(\mathscr{R},\mu)$ and on $\mathscr{M}_0(\mathscr{R},\mu)$.
\end{proposition}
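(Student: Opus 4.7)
The plan is to combine a layer-cake decomposition of $h$ with the well-known subadditivity of the functional $u\mapsto\int_0^r u^*(s)\,ds$ for each fixed truncation level $r$.

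First I would record the building block: for every fixed $r\in(0,\mu(\RR))$, the operator $u\mapsto\int_0^r u^*(s)\,ds=r\,u^{**}(r)$ is subadditive on both $\M_+(\RR,\mu)$ and $\M_0(\RR,\mu)$. This is immediate from the inequality $(u+v)^{**}(r)\le u^{**}(r)+v^{**}(r)$ recorded in Remark~\ref{propsf**}, which covers exactly the two required cones.

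Next I would exploit the shape of $h$. Since $h$ is non-increasing and right-continuous, for each $\lambda>0$ the superlevel set $\{s\in(0,\mu(\RR)):h(s)>\lambda\}$ is an interval starting at $0$; setting $r_\lambda=\sup\{s:h(s)>\lambda\}$, right-continuity at $r_\lambda$ forces $h(r_\lambda)\le\lambda$, so this set equals $(0,r_\lambda)$ (with the obvious conventions when $r_\lambda$ is $0$ or $\mu(\RR)$). The layer-cake identity then reads
\[
h(s)=\int_0^\infty\chi_{(0,r_\lambda)}(s)\,d\lambda\qquad\text{for a.e.~$s\in(0,\mu(\RR))$.}
\]
Inserting this into $\int_0^t u^*(s)h(s)\,ds$ and applying Tonelli's theorem gives
\[
\int_0^t u^*(s)h(s)\,ds=\int_0^\infty\left(\int_0^{\min\{t,r_\lambda\}}u^*(s)\,ds\right)d\lambda.
\]

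Now I would conclude: for each fixed $\lambda>0$, the inner bracket is subadditive in $u$ by the building block applied with $r=\min\{t,r_\lambda\}$, and subadditivity is preserved by integration in $\lambda$ against a non-negative measure. Hence the left-hand side is subadditive in $u$ on each of the cones $\M_+(\RR,\mu)$ and $\M_0(\RR,\mu)$, which is the claim.

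The only mildly delicate step I anticipate is the identification of the superlevel sets of $h$ as intervals of the form $(0,r_\lambda)$: this is where the right-continuity hypothesis enters, pinning down the right endpoint so that the subsequent Tonelli/Fubini manipulation can be applied cleanly without ambiguity about a measure-zero boundary point. Everything else is a routine transfer of scalar subadditivity through a non-negative integration.
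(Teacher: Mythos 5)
Your argument is correct, and it reaches the paper's conclusion by a slightly different decomposition of $h$. The paper represents $h$ ``vertically'' as $h(s)=c+\int_s^{\mu(\RR)}d\nu$ for a constant $c$ and a Lebesgue--Stieltjes measure $\nu$ (essentially $-dh$), and then Fubini turns $\int_0^t u^*h$ into $\bigl(c+\nu((t,\mu(\RR)))\bigr)tu^{**}(t)+\int_0^t yu^{**}(y)\,d\nu(y)$; you instead slice ``horizontally'' via the layer-cake formula $h(s)=\int_0^\infty\chi_{(0,r_\lambda)}(s)\,d\lambda$ and Tonelli, obtaining $\int_0^\infty \min\{t,r_\lambda\}\,u^{**}(\min\{t,r_\lambda\})\,d\lambda$. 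Both routes reduce the claim to the same building block, the subadditivity of $(u,r)\mapsto ru^{**}(r)$ on the two cones from Remark~\ref{propsf**}, and both are complete. Your version has the small advantage of not needing to introduce the Stieltjes measure or the constant term, and Tonelli applies with no integrability caveats since everything is non-negative; the paper's version yields a slightly more explicit formula for the functional. Your observation that right-continuity only pins down a single (hence null) boundary point of each superlevel set is accurate --- in your approach the hypothesis is essentially cosmetic, whereas in the paper's it is what makes the Stieltjes representation of $h$ clean. One pedantic point worth a sentence in a final write-up: joint measurability of $(s,\lambda)\mapsto\chi_{(0,r_\lambda)}(s)$ follows because this is the indicator of the subgraph region $\{(s,\lambda):h(s)>\lambda\}$ of the measurable function $h$, so Tonelli is indeed applicable.
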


\begin{proof}
Fix $t\in(0,\mu(\RR))$ and let us represent $h$ as
$$
h(s)=c+\int_s^{\mu(\RR)} \,d\nu \quad\text{for $s\in(0,\mu(\RR))$,}
$$
where c is a constant and $\nu$ is a Lebesgue-Stieltjes measure on $(0,{\mu(\RR)})$. Then, by the Fubini theorem, we have
\begin{align*}
    \int_0^{t} u^*(s)h(s)\,d s
    &= c \int_0^{t} u^*(s)\,d s + \int_0^{t} u^*(s)\int_s^{t} \,d \nu(y) \,d s + \int_0^{t} u^*(s)\int_t^{\mu(\RR)} \,d \nu(y) \,d s
        \\
    &= c \int_0^{t} u^*(s)\,d s + \int_0^{t} \int_{0}^{y}u^*(s)\,ds \,d \nu(y)  + \int_t^{\mu(\RR)} \,d \nu(y)\int_0^{t} u^*(s)\,d s
        \\
    &= \left(c+\int_t^{\mu(\RR)} \,d \nu(y)\right)tu^{**}(t) + \int_0^{t} yu^{**}(y)\,d \nu(y),
\end{align*}
which is subadditive by Remark~\ref{propsf**}.
\end{proof}

\begin{thm}\label{norm}
Let $\varrho$ be a rearrangement-invariant norm over $(\RR,\mu)$ and let $\alpha\in[1,\infty)$. Then $\varrho^{\aaa}$ satisfies \textup{(P1)}.
\end{thm}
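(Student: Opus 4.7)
Theorem~\ref{quasinorm} already delivers every component of (P1) except genuine subadditivity: there the constant $2^{1-1/\alpha}$ appears in place of $1$. My plan is therefore to prove directly that $\varrho^{\aaa}(f+g) \le \varrho^{\aaa}(f) + \varrho^{\aaa}(g)$ for $f, g \in \M_+(\RR,\mu)$ by first establishing the pointwise estimate
$$\bigl[((f+g)^\alpha)^{**}(t)\bigr]^{1/\alpha} \le \bigl[(f^\alpha)^{**}(t)\bigr]^{1/\alpha} + \bigl[(g^\alpha)^{**}(t)\bigr]^{1/\alpha} \quad\text{for every } t \in (0, \mu(\RR)),$$
and then applying $\overline{\varrho}$ together with its monotonicity (P2) and ordinary triangle inequality (P1).

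The pointwise inequality is where the hypothesis $\alpha \ge 1$ becomes essential. Using the identity $(f^\alpha)^*(s) = (f^*(s))^\alpha$ from Remark~\ref{propsf**}, each side rewrites as $t^{-1/\alpha}$ times an $L^\alpha(0,t)$-norm of the corresponding non-increasing rearrangement, so it suffices to verify $\|(f+g)^*\|_{L^\alpha(0,t)} \le \|f^*\|_{L^\alpha(0,t)} + \|g^*\|_{L^\alpha(0,t)}$. I would chain two classical tools. First, the Hardy--Littlewood--P\'olya principle yields
$$\|(f+g)^*\|_{L^\alpha(0,t)} \le \|f^*+g^*\|_{L^\alpha(0,t)},$$
because the subadditivity $(f+g)^{**} \le f^{**} + g^{**}$ from Remark~\ref{propsf**} is exactly the HLP relation $(f+g)^* \prec f^*+g^*$ restricted to $(0,t)$, and, for $\alpha \ge 1$, $L^\alpha(0,t)$ is an r.i.~norm to which HLP applies. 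Second, the ordinary Minkowski inequality on $L^\alpha(0,t)$ splits the right-hand side into $\|f^*\|_{L^\alpha(0,t)} + \|g^*\|_{L^\alpha(0,t)}$. Dividing by $t^{1/\alpha}$ delivers the pointwise bound. An alternative route---presumably the reason Proposition~\ref{P:maly} was placed immediately beforehand---is to dualise the $L^\alpha(0,t)$-norm against $L^{\alpha'}(0,t)$, restrict the sup to non-increasing weights via the Hardy--Littlewood inequality, and apply Proposition~\ref{P:maly} together with H\"older's inequality to each such weight, reaching the same estimate without an explicit appeal to HLP.

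The main obstacle is the non-linearity of the map $f \mapsto f^\alpha$ combined with the failure of the pointwise bound $(f+g)^* \le f^*+g^*$: the crude estimate $(f+g)^\alpha \le 2^{\alpha-1}(f^\alpha + g^\alpha)$ only recovers the quasi-norm constant already produced in Theorem~\ref{quasinorm}. The restriction $\alpha \ge 1$ is genuinely necessary in both approaches outlined above, since both the HLP step and the $L^{\alpha'}$-duality step require $L^\alpha$ to be an r.i.~norm, and this is the case precisely when $\alpha \ge 1$.
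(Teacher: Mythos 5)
Your proof is correct, but it reaches the key pointwise inequality $\left(((f+g)^\alpha)^{**}(t)\right)^{1/\alpha}\le\left((f^\alpha)^{**}(t)\right)^{1/\alpha}+\left((g^\alpha)^{**}(t)\right)^{1/\alpha}$ by a genuinely different route than the paper. The paper does not invoke the Hardy--Littlewood--P\'olya principle at all: it applies Proposition~\ref{P:maly} with the specific weight $h=((f+g)^*)^{\alpha-1}$ to replace $(f+g)^*$ by $f^*+g^*$ inside $\int_0^t(f+g)^*(s)^{\alpha-1}\,\cdot\,ds$, then uses H\"older's inequality with exponents $\tfrac{\alpha}{\alpha-1}$ and $\alpha$ on each term and divides through by $\left(\int_0^t(f+g)^*(s)^\alpha\,ds\right)^{1-1/\alpha}$ --- the classical ``Minkowski via H\"older'' device transplanted to rearrangements. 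That division forces the paper into a separate argument for the case $\int_0^t(f+g)^*(s)^\alpha\,ds=\infty$. Your chain --- the majorization $(f+g)^*\prec f^*+g^*$ on $(0,t)$ (which is exactly $(u+v)^{**}\le u^{**}+v^{**}$ from Remark~\ref{propsf**}), then the HLP principle for the r.i.~norm $L^\alpha(0,t)$, then ordinary Minkowski --- avoids any division and hence needs no degenerate-case analysis (when the right-hand side is infinite the inequality is vacuous), which is a genuine simplification. The trade-off is that you outsource the essential step to the HLP principle for $L^\alpha$, whose standard proof for this concrete norm is essentially the Proposition~\ref{P:maly}-plus-H\"older computation the paper carries out by hand; since the principle is stated in the Preliminaries as an available tool, this is a legitimate shortcut. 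Your identification of where $\alpha\ge1$ enters (the r.i.~normability of $L^\alpha$, equivalently the convexity used in both the HLP and Minkowski steps) matches the role it plays in the paper's H\"older exponents, and your aside about dualizing against $L^{\alpha'}$ correctly guesses the purpose of Proposition~\ref{P:maly}, even though the paper uses it directly rather than through duality.
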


\begin{proof}
In view of Theorem~\ref{quasinorm} it only remains to show that $\varrho^{\aaa}$ satisfies the triangle inequality. This is clear if $\alpha=1$, so let us assume that $\alpha\in(1,\infty)$. Suppose that $f,g\in \M_+(\RR,\mu)$ and fix $t\in(0,\mu(\RR))$. If $f+g$ is equal to zero $\mu$-almost everywhere on $\RR$, then there is nothing to prove. Assume this is not the case. The function $\left(f+g\right)^{*}(s)^{\alpha-1}$ is non-increasing and right-continuous in $s$ on $(0,\mu(\RR))$. Therefore, by Proposition~\ref{P:maly} applied to $h=((f+g)^{*})^{\alpha-1}$ followed by the double use of the H\" older inequality~\eqref{E:holder} with $\varrho_{\frac{\alpha}{\alpha-1}}$ and $\varrho_{\alpha}$, we get
\begin{align*}
&\int_0^t\left(f+g\right)^{*}(s)^\alpha\,d s
=
\int_0^t\left(f+g\right)^{*}(s)^{\alpha-1}\left(f+g\right)^{*}(s)\,d s\\
&\leq
\int_0^t\left(f+g\right)^{*}(s)^{\alpha-1}\left(f^{*}(s)+g^{*}(s)\right)\,d s\\
& =
\int_0^t\left(f+g\right)^{*}(s)^{\alpha-1} f^{*}(s)\,d s+\int_0^t\left(f+g\right)^{*}(s)^{\alpha-1} g^{*}(s)\,d s\\
&\leq
\left(\int_0^t\left(f+g\right)^{*}(s)^{\alpha}\,d s\right)^{\frac{\alpha-1}{\alpha}}\left(\int_0^t f^{*}(s)^\alpha\,d s\right)^{\frac1{\alpha}}\\
&\qquad+
\left(\int_0^t\left(f+g\right)^{*}(s)^{\alpha}\,d s\right)^{\frac{\alpha-1}{\alpha}}\left(\int_0^t g^{*}(s)^\alpha\,d s\right)^{\frac1{\alpha}}\\
&=
\left(\int_0^t\left(f+g\right)^{*}(s)^{\alpha}\,d s\right)^{1-\frac{1}{\alpha}}\left(\left(\int_0^t f^{*}(s)^\alpha\,d s\right)^{\frac1{\alpha}}+\left(\int_0^t g^{*}(s)^\alpha\,d s\right)^{\frac1{\alpha}}\right).
\end{align*}
 Assume that for every $t\in(0,\mu(\RR))$ one has $\int_0^t\left(f+g\right)^{*}(s)^{\alpha}\,d s<\infty$. Since $f+g$ is not equal to the zero function $\mu$-almost everywhere on $\RR$, $\int_0^t\left(f+g\right)^{*}(s)^\alpha\,d s$ is positive regardless of~$t$. Hence, dividing, we get
\begin{align*}
\left(\int_0^t\left(f+g\right)^{*}(s)^{\alpha}\,d s\right)^{\frac{1}{\alpha}}\leq\left(\int_0^tf^{*}(s)^\alpha\,d s\right)^{\frac1{\alpha}}+\left(\int_0^tg^{*}(s)^\alpha\,d s\right)^{\frac1{\alpha}} \quad\text{for every $t\in(0,\mu(\RR))$.}
\end{align*}
Multiplying both sides by $t^{-\frac1{\alpha}}$, we arrive at pointwise estimate
\begin{align*}
\left((f+g)^\alpha\right)^{**}(t)^{\frac1{\alpha}}
\leq
\left(f^\alpha\right)^{**}(t)^{\frac1{\alpha}}
+\left(g^\alpha\right)^{**}(t)^{\frac1{\alpha}} \quad\text{for every $t\in(0,\mu(\RR))$.}
\end{align*}
Using (P2) and (P1) for $\overline{\varrho}$, we finally obtain
\begin{align*}
\varrho^{\aaa}(f+g)\le\varrho^{\aaa}(f) + \varrho^{\aaa}(g),
\end{align*}
establishing the statement. If, for some $t\in(0,\mu(\RR))$, one has $\int_0^t\left(f+g\right)^{*}(s)^{\alpha}\,d s=\infty$, then, by Remark~\ref{propsf**} and a change of variables, we get
\begin{align*}
    \int_0^t\left(f+g\right)^{*}(s)^{\alpha}\,d s
        & \le \int_0^t\left(f^*(\tfrac{s}{2})+g^*(\tfrac{s}{2})\right)^{\alpha}\,d s = 2 \int_0^{\frac{t}{2}}\left(f^*(\tau)+g^*(\tau)\right)^{\alpha}\,d \tau
            \\
        & \le 2 \int_0^{t}\left(f^*(\tau)+g^*(\tau)\right)^{\alpha}\,d \tau \le 2^{\alpha} \int_0^{t}\left(f^*(\tau)^{\alpha}+g^*(\tau)^{\alpha}\right)\,d \tau,
\end{align*}
so at least one of the terms $\int_0^tf^{*}(s)^\alpha\,d s$,
$\int_0^tg^{*}(s)^\alpha\,d s$ must be infinite. Assume with no loss of generality that $\int_0^tf^{*}(s)^\alpha\,d s=\infty$. Then, by monotonicity of $f^{*}$, we conclude that $(f^{\alpha})^{**}(s)=\infty$ for every $s\in(0,t)$. Consequently $\varrho^{\aaa}(f)=\infty$, the more so $\varrho^{\aaa}(f)+\varrho^{\aaa}(g)=\infty$, whence the assertion holds, again.  The proof is complete.
\end{proof}

One of the most important characteristics of any rearrangement-invariant structure is its fundamental function. We focus on it in the next theorem.

\begin{thm}[fundamental function]\label{FFekv}
Let $\varrho$ be a rearrangement-invariant norm over $(\RR,\mu)$ and let $\alpha\in(0,\infty)$. Let $\varphi^{\aaa}$ denote the fundamental function of $\varrho^{\aaa}$. Then
\begin{equation*}
    \varphi^{\aaa}\left(a\right) \approx a^{\frac{1}{\alpha}}\overline{\varrho}\left(t^{-\frac{1}{\alpha}}\chi_{\left(a,\mu(\RR)\right)}\left(t\right)\right) \quad\text{for $a\in \left(0,\tfrac{\mu(\RR)}{2}\right)$}
\end{equation*}
with constants of equivalence depending only on $\alpha$.
\end{thm}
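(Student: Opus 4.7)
Fix $a\in(0,\mu(\RR)/2)$ and let $E\subset\RR$ be any $\mu$-measurable set with $\mu(E)=a$. The proof has two ingredients: an explicit computation of $((\chi_E^\alpha)^{**})^{1/\alpha}$, followed by a two-sided comparison between the resulting function and its restriction to $(a,\mu(\RR))$, using only properties (P1), (P2), (P6) of $\overline{\varrho}$ together with the hypothesis $a<\mu(\RR)/2$.

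First I would note that since $|\chi_E|^\alpha=\chi_E$ and $\chi_E^{*}=\chi_{(0,a)}$, a direct integration gives
\begin{equation*}
(\chi_E^\alpha)^{**}(t)=\frac{1}{t}\int_0^t\chi_{(0,a)}(s)\,ds=\min\bigl(1,\tfrac{a}{t}\bigr),\quad t\in(0,\mu(\RR)).
\end{equation*}
Raising to $1/\alpha$ and separating on $(0,a)$ and $(a,\mu(\RR))$, this yields
\begin{equation*}
\varphi^{\aaa}(a)=\overline{\varrho}\!\left(\chi_{(0,a)}(t)+\bigl(\tfrac{a}{t}\bigr)^{\frac{1}{\alpha}}\chi_{(a,\mu(\RR))}(t)\right),
\end{equation*}
and by positive homogeneity the claimed right-hand side equals $\overline{\varrho}\bigl((a/t)^{1/\alpha}\chi_{(a,\mu(\RR))}\bigr)$.

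The lower bound $\overline{\varrho}\bigl((a/t)^{1/\alpha}\chi_{(a,\mu(\RR))}\bigr)\le\varphi^{\aaa}(a)$ is then immediate from (P2), since $(a/t)^{1/\alpha}\chi_{(a,\mu(\RR))}(t)$ is pointwise dominated by the full integrand. For the upper bound I would apply the triangle inequality (P1) for $\overline{\varrho}$ to split
\begin{equation*}
\varphi^{\aaa}(a)\le\overline{\varrho}(\chi_{(0,a)})+\overline{\varrho}\bigl((a/t)^{1/\alpha}\chi_{(a,\mu(\RR))}\bigr),
\end{equation*}
and then control the first summand by the second.

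This last control is the main (and only) nontrivial point, and it is exactly where the restriction $a<\mu(\RR)/2$ enters. Under this restriction, $2a<\mu(\RR)$, so for $t\in(a,2a)$ we have $(a/t)^{1/\alpha}\ge 2^{-1/\alpha}$, whence
\begin{equation*}
(a/t)^{1/\alpha}\chi_{(a,\mu(\RR))}(t)\ge 2^{-1/\alpha}\chi_{(a,2a)}(t).
\end{equation*}
By (P2) followed by rearrangement invariance (P6) (noting $\chi_{(a,2a)}$ and $\chi_{(0,a)}$ have identical non-increasing rearrangement), I obtain
\begin{equation*}
\overline{\varrho}\bigl((a/t)^{1/\alpha}\chi_{(a,\mu(\RR))}\bigr)\ge 2^{-1/\alpha}\overline{\varrho}(\chi_{(a,2a)})=2^{-1/\alpha}\overline{\varrho}(\chi_{(0,a)}).
\end{equation*}
Combining this with the triangle inequality above yields $\varphi^{\aaa}(a)\le(1+2^{1/\alpha})\,\overline{\varrho}\bigl((a/t)^{1/\alpha}\chi_{(a,\mu(\RR))}\bigr)$, which finishes the proof with constants depending solely on $\alpha$. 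The only subtle aspect is the need for $a<\mu(\RR)/2$, which cannot be removed without losing room to place the auxiliary translated interval $(a,2a)$ inside $(0,\mu(\RR))$.
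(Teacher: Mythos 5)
Your proposal is correct and follows essentially the same route as the paper's proof: the same explicit computation of $(\chi_E)^{**}$, the same lower bound via (P2), the same upper bound via the triangle inequality, and the same comparison of $\overline{\varrho}(\chi_{(0,a)})$ with $\overline{\varrho}\bigl((a/t)^{1/\alpha}\chi_{(a,2a)}\bigr)$ using (P6), yielding the identical constant $1+2^{1/\alpha}$. Nothing to add.
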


\begin{proof}
Fix $a\in(0,\mu(\RR))$. Let $E\subset\RR$ such that $\mu(E)=a$. Then
\begin{equation*}
\left(\chi_{E}\right)^{**}(t)
=
\chi_{(0,a]}\left(t\right)+\frac{a}{t}\chi_{(a,\mu(\RR))}\left(t\right)\quad\text{for $t\in(0,\mu(\RR))$}.
\end{equation*}
 Thus,
 \begin{equation*}
\left(\chi_{E}\right)^{**}(t)^{\frac{1}{\alpha}}
=
\chi_{(0,a]}\left(t\right)+a^{\frac{1}{\alpha}}t^{-\frac{1}{\alpha}}\chi_{(a,\mu(\RR))}\left(t\right)\quad\text{for $t\in(0,\mu(\RR))$}.
\end{equation*}
Consequently,
\begin{align}
    \varphi^{\aaa}\left(a\right)
    = \overline{\varrho}\left(\left(\chi_{E}\right)^{**}(t)^{\frac{1}{\alpha}} \right)
    = \overline{\varrho}\left(\chi_{(0,a]}\left(t\right)+a^{\frac{1}{\alpha}}t^{-\frac{1}{\alpha}}\chi_{(a,\mu(\RR))}\left(t\right)\right).
    \label{E:FF}
\end{align}
Since all terms are non-negative, using (P2) and positive homogeneity for $\overline{\varrho}$, we immediately obtain the lower bound, namely
\begin{align}
    &\varphi^{\aaa}\left(a\right) \geq
    a^{\frac{1}{\alpha}}\overline{\varrho}\left(t^{-\frac{1}{\alpha}}\chi_{(a,\mu(\RR))}\left(t\right)\right). \label{E:varphi-lower-bound}
\end{align}
We will prove the upper bound. We get from~\eqref{E:FF} and (P1) for $\overline{\varrho}$
\begin{align}
    \varphi^{\aaa}\left(a\right)
        & \le \overline{\varrho}\left(\chi_{(0,a]}\right) + a^{\frac{1}{\alpha}}\overline{\varrho}\left(t^{-\frac{1}{\alpha}}\chi_{(a,\mu(\RR))}\left(t\right)\right).\label{E:varphi-upper}
\end{align}
Next we will show that for $a$ close to zero the first summand on the right in~\eqref{E:varphi-upper} is negligible. To this end, assume that $a\in \left(0,\tfrac{\mu(\RR)}{2}\right)$. Then, by positive homogeneity, (P2) and (P6) for $\overline{\varrho}$, we have
\begin{align*}
    \overline{\varrho}\left(t^{-\frac1{\alpha}}\chi_{(a,\mu(\RR))}(t)\right)
        & \ge \overline{\varrho}\left(t^{-\frac1{\alpha}}\chi_{(a,2a]}(t)\right)
        \ge (2a)^{-\frac{1}{\alpha}}\overline{\varrho}\left(\chi_{(a,2a]}\right)\nonumber
        = (2a)^{-\frac{1}{\alpha}}\overline{\varrho}\left(\chi_{(0,a]}\right).
\end{align*}
Plugging this into~\eqref{E:varphi-upper} and combining it with~\eqref{E:varphi-lower-bound}, we get
\begin{equation*}
    a^{\frac{1}{\alpha}}\overline{\varrho}\left(t^{-\frac{1}{\alpha}}\chi_{\left(a,\mu(\RR)\right)}\left(t\right)\right)
        \le \varphi^{\aaa}(a)
        \le \left(1+2^{\frac{1}{\alpha}}\right)
            a^{\frac{1}{\alpha}}\overline{\varrho}\left(t^{-\frac{1}{\alpha}}\chi_{\left(a,\mu(\RR)\right)}\left(t\right)\right),
\end{equation*}
establishing the claim.
\end{proof}

\begin{rem}
In Theorem~\ref{FFekv}, the case $\mu(\RR)=\infty$ is a possibility (then the equivalence holds for every $a\in(0,\infty)$). If $\mu(\RR)<\infty$, then, for $a$ close to $\mu(\mathscr{R})$, the result is clearly wrong. In such case, the best upper bound is given by~\eqref{E:varphi-upper}.
\end{rem}

\begin{thm}\label{T:P4}
Let $\varrho$ be a rearrangement-invariant norm over $(\RR,\mu)$ and let $\alpha\in(0,\infty)$. Then the following statements are equivalent:

\textup{(a)} $\varrho^{\aaa}$ satisfies \textup{(P4)},

\textup{(b)} either $\mu(\RR)<\infty$ or $\mu(\RR)=\infty$ and
\begin{equation}\label{E:P4}
    \overline{\varrho}\left(t^{-\frac{1}{\alpha}}\chi_{(a_0,\infty)}(t)\right)<\infty\quad\text{for some $a_0\in(0,\infty)$,}
\end{equation}

\textup{(c)} either $\mu(\RR)<\infty$ or $\mu(\RR)=\infty$ and
\begin{equation}\label{E:P4-all}
    \overline{\varrho}\left(t^{-\frac{1}{\alpha}}\chi_{(a,\infty)}(t)\right)<\infty\quad\text{for every $a\in(0,\infty)$.}
\end{equation}
\end{thm}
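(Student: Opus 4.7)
The property (P4) for $\varrho^{\aaa}$ is, by definition, the requirement that $\varphi^{\aaa}(a)<\infty$ for every $a\in(0,\mu(\RR))$ with $a<\infty$, where $\varphi^{\aaa}$ is the fundamental function of $\varrho^{\aaa}$. So the whole theorem is really a statement about when $\varphi^{\aaa}$ stays finite, and Theorem~\ref{FFekv} already supplies the key two-sided estimate. The strategy is therefore: reduce (a) to finiteness of $\varphi^{\aaa}$, split according to whether $\mu(\RR)$ is finite or infinite, and in the infinite case feed the estimate of Theorem~\ref{FFekv} into both directions.

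First I would dispose of the finite-measure case. If $\mu(\RR)<\infty$, then for any $a\in(0,\mu(\RR))$ the function $t^{-\frac{1}{\alpha}}\chi_{(a,\mu(\RR))}(t)$ is bounded above by $a^{-\frac{1}{\alpha}}\chi_{(a,\mu(\RR))}$, whose $\overline{\varrho}$-norm is finite by (P4) for $\overline{\varrho}$; combined with $\overline{\varrho}(\chi_{(0,a]})<\infty$, the upper bound from Theorem~\ref{FFekv} (strictly, from~\eqref{E:varphi-upper}) gives $\varphi^{\aaa}(a)<\infty$ for every such $a$. Hence (a), (b) and (c) all hold automatically. This takes care of one side of the disjunction in both (b) and (c).

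Next, assume $\mu(\RR)=\infty$. For the implication (a)$\Rightarrow$(c), given any $a\in(0,\infty)$, pick some $a'\in(0,a)$ with $a'<\mu(\RR)/2$ (trivial here), so that Theorem~\ref{FFekv} gives
\begin{equation*}
a'^{\frac{1}{\alpha}}\overline{\varrho}\bigl(t^{-\frac{1}{\alpha}}\chi_{(a',\infty)}(t)\bigr)\lesssim \varphi^{\aaa}(a')<\infty,
\end{equation*}
and monotonicity of $t^{-\frac{1}{\alpha}}\chi_{(\cdot,\infty)}(t)$ in its first argument then yields the required finiteness at $a$. The implication (c)$\Rightarrow$(b) is trivial. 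For (b)$\Rightarrow$(c), starting from finiteness at some $a_0$, one shows finiteness at an arbitrary $a\in(0,\infty)$ by splitting
\begin{equation*}
t^{-\frac{1}{\alpha}}\chi_{(a,\infty)}(t)\le a^{-\frac{1}{\alpha}}\chi_{(a,a_0]}(t)+t^{-\frac{1}{\alpha}}\chi_{(a_0,\infty)}(t)
\end{equation*}
when $a<a_0$ (the first term has finite $\overline{\varrho}$-norm by (P4) for $\overline{\varrho}$ since $(a,a_0]$ has finite measure), while for $a\ge a_0$ monotonicity suffices; (P2) and subadditivity for $\overline{\varrho}$ then give finiteness. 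Finally, (c)$\Rightarrow$(a) follows directly from the upper bound~\eqref{E:varphi-upper}, since $\overline{\varrho}(\chi_{(0,a]})<\infty$ by (P4) for $\overline{\varrho}$ and the remaining term is finite by (c).

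The only step requiring any care is (a)$\Rightarrow$(c) in the infinite case, because Theorem~\ref{FFekv} gives the two-sided estimate only for $a<\mu(\RR)/2$; but when $\mu(\RR)=\infty$ this restriction is vacuous, so no genuine obstacle arises. Essentially the proof is bookkeeping on top of Theorem~\ref{FFekv}.
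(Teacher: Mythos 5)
Your proof is correct and follows essentially the same route as the paper's: both reduce (P4) for $\varrho^{\aaa}$ to finiteness of the fundamental function and feed the estimates from Theorem~\ref{FFekv} (in particular the upper bound~\eqref{E:varphi-upper} and the lower bound) into the two non-trivial implications, with the step (b)$\Rightarrow$(c) handled by the identical splitting $t^{-\frac{1}{\alpha}}\chi_{(a,\infty)}\le a^{-\frac{1}{\alpha}}\chi_{(a,a_0]}+t^{-\frac{1}{\alpha}}\chi_{(a_0,\infty)}$. The only differences are cosmetic: you close the cycle as (a)$\Rightarrow$(c) directly where the paper argues (a)$\Rightarrow$(b) by contraposition, and in the finite-measure case you invoke~\eqref{E:varphi-upper} where the paper simply bounds $\varrho^{\aaa}(\chi_E)\le\varrho^{\aaa}(\chi_{\RR})=\overline{\varrho}(\chi_{(0,\mu(\RR))})$.
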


\begin{proof}
(a) $\Rightarrow$ (b) \quad Assume that $\mu(\RR)=\infty$ and~\eqref{E:P4} is false. Let $E$ be a $\mu$-measurable subset of $\RR$ such that $\mu(E)<\infty$. Let $\varphi^{\aaa}$ be the fundamental function of $\varrho^{\aaa}$.  Then, by Theorem~\ref{FFekv},
\begin{equation}\label{E:fundamental-estimate}
   \varrho^{\aaa}(\chi_{E}) = \varphi^{\aaa}(\mu(E))\approx \mu(E)^{\frac{1}{\alpha}}\overline{\varrho}\left(t^{-\frac{1}{\alpha}}\chi_{(\mu(E),\infty)}(t)\right),
\end{equation}
which is infinite thanks to the negation of~\eqref{E:P4}. Consequently, $\varrho^{\aaa}$ does not satisfy (P4).

\medskip
(b) $\Rightarrow$ (c) \quad If $\mu(\RR)<\infty$, the implication is trivial. Assume that $\mu(\RR)=\infty$ and $a\in(0,\infty)$. If $a\in[a_0,\infty)$, then, thanks to (P2) for $\overline{\varrho}$,~\eqref{E:P4-all} follows immediately from~\eqref{E:P4}. Let $a\in(0,a_0)$. By (P1) and (P2) for $\overline{\varrho}$,
\begin{align*}
    \overline{\varrho}\left(t^{-\frac{1}{\alpha}}\chi_{(a,\infty)}(t)\right)
        & \le \left(\overline{\varrho}\left(t^{-\frac{1}{\alpha}}\chi_{(a,a_0)}(t)\right) + \overline{\varrho}\left(t^{-\frac{1}{\alpha}}\chi_{(a_0,\infty)}(t)\right)\right)
            \\
        & \le a^{-\frac{1}{\alpha}}\overline{\varrho}\left(\chi_{(a,a_0)}(t)\right) + \overline{\varrho}\left(t^{-\frac{1}{\alpha}}\chi_{(a_0,\infty)}(t)\right),
\end{align*}
which is finite by (P4) for $\overline{\varrho}$ and~\eqref{E:P4}. This establishes (c).

(c) $\Rightarrow$ (a) \quad
 Assume first that $\mu(\RR)=\infty$ and that~\eqref{E:P4-all} holds. Let $E$ be a $\mu$-measurable subset of $\RR$ such that $\mu(E)<\infty$. We know from~\eqref{E:fundamental-estimate} that
\begin{align*}
    \varrho^{\aaa}(\chi_{E}) \approx \mu(E)^{\frac{1}{\alpha}}\overline{\varrho}\left(t^{-\frac{1}{\alpha}}\chi_{(\mu(E),\infty)}(t)\right),
\end{align*}
which in turn is finite by~\eqref{E:P4-all}, so the claim follows. Now assume that $\mu(\RR)<\infty$. Then, by the very definition of $\varrho^{\aaa}$, (P2) for $\varrho^{\aaa}$ and (P4) for $\overline{\varrho}$, we have
\begin{equation*}
   \varrho^{\aaa}(\chi_{E}) \le \varrho^{\aaa}(\chi_{\RR}) = \overline{\varrho}\left(\chi_{(0,\mu(\RR))}\right) < \infty,
\end{equation*}
proving the claim again. The proof is complete.
\end{proof}

\begin{rem}
Let $\varrho$ be a rearrangement-invariant norm over $(\RR,\mu)$ and let $\alpha\in(0,\infty)$. Assume that either $\mu(\RR)<\infty$ or $\mu(\RR)=\infty$ and~\eqref{E:P4} holds. Then, by Theorems~\ref{quasinorm} and~\ref{T:P4}, $\varrho^{\aaa}$ is an r.i.~quasinorm.
\end{rem}

\begin{rem}\label{R:pess}
Let $\varrho$ be a rearrangement-invariant norm over $(\RR,\mu)$, $X=X_{\varrho}$ and $\alpha\in(0,\infty)$. Then one always has $0\in X^{\aaa}$, where $0$ stands for the function which is equal to zero $\mu$-a.e.~on $\RR$. In exceptional cases it may happen that $0$ is the only element of $X^{\aaa}$. In such cases we shall say that $X^{\aaa}$ is \textit{trivial}. Clearly, $X^{\aaa}$ is non-trivial if and only if $\varrho^{\aaa}$ satisfies (P4). It thus follows from Theorem~\ref{T:P4} that $X^{\aaa}$ is non-trivial if and only if either $\mu(\RR)<\infty$ or $\mu(\RR)=\infty$ and~\eqref{E:P4} holds.
\end{rem}

We shall now apply the results obtained to Lebesgue and Lorentz spaces.

\begin{proposition}[Lebesgue spaces]\label{P:triviality-lebesgue}
Let $p\in[1,\infty]$ and $\alpha\in(0,\infty)$. Then $(L^p)^{\aaa}$ is non-trivial if and only if either $\mu(\mathscr{R})<\infty$ or $\alpha\in(0,p)$.
\end{proposition}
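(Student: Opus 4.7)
The plan is to reduce the claim to a direct computation via the machinery established in Theorem~\ref{T:P4} and Remark~\ref{R:pess}. By Remark~\ref{R:pess}, the space $(L^p)^{\aaa}$ is non-trivial if and only if $\varrho_p^{\aaa}$ satisfies (P4). Invoking Theorem~\ref{T:P4}, this in turn holds precisely when either $\mu(\RR)<\infty$ (in which case there is nothing more to verify and the stated equivalence holds trivially), or $\mu(\RR)=\infty$ and there exists some $a_0\in(0,\infty)$ such that
\begin{equation*}
    \overline{\varrho_p}\bigl(t^{-\frac{1}{\alpha}}\chi_{(a_0,\infty)}(t)\bigr)<\infty.
\end{equation*}
So the whole task is to determine, in the case $\mu(\RR)=\infty$, for which pairs $(p,\alpha)$ this Lebesgue-norm quantity is finite.

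First I would handle $p\in[1,\infty)$. Since the representation norm of $L^p$ is again a Lebesgue norm on $(0,\infty)$, the above condition becomes
\begin{equation*}
    \int_{a_0}^{\infty} t^{-\frac{p}{\alpha}}\,dt < \infty,
\end{equation*}
which is the case if and only if $\tfrac{p}{\alpha}>1$, i.e.\ $\alpha\in(0,p)$. Next, for $p=\infty$, the representation norm is the essential supremum, and
\begin{equation*}
    m\text{-}\esssup_{t>a_0} t^{-\frac{1}{\alpha}} = a_0^{-\frac{1}{\alpha}} < \infty
\end{equation*}
for any $a_0\in(0,\infty)$, so the condition is trivially fulfilled; this matches the convention that $\alpha\in(0,p)=(0,\infty)$ is automatic when $p=\infty$.

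Combining these two computations with the reduction above gives the desired equivalence. There is no serious obstacle here: the content of the proposition is essentially a translation of Theorem~\ref{T:P4} into the Lebesgue setting, and the only genuine work is the elementary integrability calculus in the display above. The mild subtlety worth pointing out explicitly in writing is that the equivalence uses the ``for some $a_0$'' version from part (b) of Theorem~\ref{T:P4} rather than the ``for every $a$'' version from part (c), since the two are equivalent and the former is the cheaper criterion to verify.
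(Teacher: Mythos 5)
Your proof is correct and follows essentially the same route as the paper: reduce via Remark~\ref{R:pess} (equivalently Theorem~\ref{T:P4}) to the finiteness of $\overline{\varrho_p}\bigl(t^{-\frac{1}{\alpha}}\chi_{(a,\infty)}(t)\bigr)$ and then compute. The paper merely asserts that this quantity is finite iff $\alpha\in(0,p)$, whereas you spell out the integral for $p<\infty$ and the essential supremum for $p=\infty$; that extra detail is welcome but not a different argument.
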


\begin{proof}
According to Remark~\ref{R:pess}, $(L^p)^{\aaa}$ is non-trivial if and only if either $\mu(\mathscr{R})<\infty$ or $\mu(\mathscr{R})=\infty$ and there exists an $a\in(0,\infty)$ such that $\|t^{-\frac{1}{\alpha}}\chi_{(a,\infty)}\|_{L^p(0,\infty)}<\infty.$ This, in turn, is true if and only if $\alpha\in(0,p)$.
\end{proof}

\begin{proposition}[Lorentz spaces]\label{P:triviality-lorentz}
Let either $p=q=1$ or $p=q=\infty$ or $p\in(1,\infty)$, $q\in[1,\infty]$. Let $\alpha\in(0,\infty)$. Then $\left(L^{p,q}\right)^{\aaa}$ is non-trivial if and only if one of the following conditions holds:
\begin{itemize}
\item $\mu(\RR)<\infty$,
\item $\alpha\in(0,p)$ and $q\in[1,\infty)$,
\item $\alpha\in(0,p]$ and $q=\infty$.
\end{itemize}
\end{proposition}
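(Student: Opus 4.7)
The plan is to invoke Remark~\ref{R:pess}, which reduces the non-triviality question to the following: when $\mu(\RR)=\infty$, does there exist some $a\in(0,\infty)$ such that
\begin{equation*}
\left\|t^{-\frac{1}{\alpha}}\chi_{(a,\infty)}\right\|_{L^{p,q}(0,\infty)}<\infty?
\end{equation*}
I will evaluate this norm explicitly via the representation formula $\varrho_{p,q}(f)=\overline{\varrho_q}(s^{\frac{1}{p}-\frac{1}{q}}f^*(s))$, and read off the conditions on the parameters.

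First I would compute the non-increasing rearrangement of $f_a(t):=t^{-\frac{1}{\alpha}}\chi_{(a,\infty)}(t)$ by a direct distribution-function argument. This yields $f_a^*(s)=(s+a)^{-\frac{1}{\alpha}}$ for $s\in[0,\infty)$, which satisfies $f_a^*(s)\le a^{-\frac{1}{\alpha}}$ on $(0,a)$ and $f_a^*(s)\approx s^{-\frac{1}{\alpha}}$ on $(a,\infty)$. Plugging into the representation formula and splitting by the value of $q$, the question reduces to analyzing a one-dimensional integral (if $q<\infty$) or a supremum (if $q=\infty$).

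For $q\in[1,\infty)$, the norm is finite if and only if $\int_0^{\infty} s^{\frac{q}{p}-1}(s+a)^{-\frac{q}{\alpha}}\,ds<\infty$. The behavior at $0$ is governed by $s^{\frac{q}{p}-1}$, integrable since $\frac{q}{p}>0$, and the behavior at infinity is like $s^{\frac{q}{p}-1-\frac{q}{\alpha}}$, integrable exactly when $\frac{q}{p}<\frac{q}{\alpha}$, that is, $\alpha<p$. For $q=\infty$, one instead analyzes $\sup_{s>0}s^{\frac{1}{p}}(s+a)^{-\frac{1}{\alpha}}$; this vanishes at $0$ always, and is bounded at infinity iff $\frac{1}{p}\le\frac{1}{\alpha}$, that is $\alpha\le p$ (when $p=\infty$ the supremum equals $a^{-\frac{1}{\alpha}}$, so the condition $\alpha\le p$ becomes vacuous, recovering the case $p=q=\infty$).

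Combining these cases with the trivial direction $\mu(\RR)<\infty\Rightarrow$ non-trivial reproduces exactly the three disjuncts in the statement. I do not foresee a real obstacle; the only mild care needed is at the endpoint $\alpha=p$ with $q=\infty$, where $s^{\frac{1}{p}}(s+a)^{-\frac{1}{p}}\le 1$ so the supremum is finite, accounting for the closed right-endpoint $\alpha\in(0,p]$ when $q=\infty$ versus the open one $\alpha\in(0,p)$ when $q<\infty$.
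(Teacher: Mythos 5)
Your proposal is correct and follows essentially the same route as the paper: reduction via Remark~\ref{R:pess}, the computation $\left(t^{-\frac{1}{\alpha}}\chi_{(a,\infty)}\right)^*(s)=(s+a)^{-\frac{1}{\alpha}}$, and a case split on $q<\infty$ versus $q=\infty$ examining integrability (resp.\ boundedness) at infinity. The endpoint discussion for $\alpha=p$, $q=\infty$ and the vacuous case $p=q=\infty$ are handled exactly as in the paper's argument.
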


\begin{proof}
By Remark~\ref{R:pess} once again, $\left(L^{p,q}\right)^{\aaa}$ is non-trivial if and only if either $\mu(\mathscr{R})<\infty$ or $\mu(\mathscr{R})=\infty$ and there exists an $a\in(0,\infty)$ such that $\|t^{-\frac{1}{\alpha}}\chi_{(a,\infty)}(t)\|_{L^{p,q}(0,\infty)}<\infty$. Assume that $\mu(\mathscr{R})=\infty$ and $q\in[1,\infty)$. Then
\begin{align*}
    \left\|t^{-\frac1{\alpha}}\chi_{(a,\infty)}(t)\right\|_{L^{p,q}(0,\infty)}^{q} = \int_0^\infty \left(t^{\frac1p}(t+a)^{-\frac1{\alpha}}\right)^q\frac{dt}t,
\end{align*}
which converges if and only if $\alpha\in(0,p)$. Now assume that $\mu(\mathscr{R})=\infty$ and $q=\infty$. Then
$$
\left\|t^{-\frac1{\alpha}}\chi_{(a,\infty)}(t)\right\|_{L^{p,\infty}(0,\infty)} = \operatorname{ess\,sup}_{t\in (0,\infty)} t^{\frac1p}(t+a)^{-\frac1{\alpha}},
$$
which is finite if and only if $\alpha\in(0,p]$. The proof is complete.
\end{proof}

\begin{thm}\label{XaBFS}
Let $\varrho$ be a rearrangement-invariant norm over $(\RR,\mu)$ and let $\alpha\in[1,\infty)$. Then $\varrho^{\aaa}$ is an r.i.~norm if and only if either $\mu(\RR)<\infty$ or $\mu(\RR)=\infty$ and~\eqref{E:P4} holds.
\end{thm}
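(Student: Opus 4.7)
The plan is to assemble this result essentially from pieces already in hand: Theorems~\ref{quasinorm}, \ref{norm}, and \ref{T:P4} cover five of the six axioms, so the only non-routine work left is verifying (P5), and the forward implication is an immediate corollary of Theorem~\ref{T:P4}.

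For the forward direction I would simply observe that if $\varrho^{\aaa}$ is an r.i.~norm, then in particular $\varrho^{\aaa}$ satisfies (P4), so Theorem~\ref{T:P4} forces either $\mu(\RR)<\infty$ or the integrability condition~\eqref{E:P4}.

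For the reverse direction, assume the stated condition on $(\RR,\mu)$. I would verify the axioms one at a time, citing the previous results:
\begin{itemize}
\item (P2), (P3), (P6) hold by Theorem~\ref{quasinorm};
\item (P1) holds by Theorem~\ref{norm}, which is exactly where the restriction $\alpha\in[1,\infty)$ enters (via Proposition~\ref{P:maly} and the H\"older step that produced the genuine triangle inequality);
\item (P4) holds by Theorem~\ref{T:P4} under the assumed condition on $\mu$.
\end{itemize}
Hence only (P5) requires a separate argument.

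To establish (P5), I would use Proposition~\ref{XaembX} as the key ingredient. Namely, for every $f\in\M_+(\RR,\mu)$ one has $\varrho(f)\le\varrho^{\aaa}(f)$. Since $\varrho$ is itself an r.i.~norm, it satisfies (P5): for each $\mu$-measurable $E\subset\RR$ with $\mu(E)<\infty$ there exists $C_E\in(0,\infty)$ such that
\begin{equation*}
    \int_E f\,d\mu \le C_E \,\varrho(f)\le C_E\,\varrho^{\aaa}(f)\quad\text{for every $f\in\M_+(\RR,\mu)$,}
\end{equation*}
which is precisely (P5) for $\varrho^{\aaa}$ with the same constant $C_E$. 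Combined with the other five axioms above, this shows $\varrho^{\aaa}$ is an r.i.~norm, completing the proof.

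There is essentially no obstacle here beyond the bookkeeping: the hypothesis $\alpha\ge 1$ is used exclusively to inherit the triangle inequality from Theorem~\ref{norm}, and the domination $\varrho\le\varrho^{\aaa}$ gives (P5) for free. If anything, the only subtlety worth flagging is that one must not try to derive (P5) from the fundamental function estimate of Theorem~\ref{FFekv}, which would require additional work; the one-line reduction through Proposition~\ref{XaembX} bypasses this entirely.
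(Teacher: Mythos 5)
Your proposal is correct and follows exactly the same route as the paper's own proof: the forward direction via (P4) and Theorem~\ref{T:P4}, the reverse direction by collecting (P1)--(P4) and (P6) from Theorems~\ref{norm}, \ref{quasinorm} and \ref{T:P4}, and (P5) from the domination $\varrho\le\varrho^{\aaa}$ of Proposition~\ref{XaembX} together with (P5) for $\varrho$. No gaps.
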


\begin{proof}
Assume that either $\mu(\RR)<\infty$ or $\mu(\RR)=\infty$ and~\eqref{E:P4} holds. Then property (P1) for $\varrho^{\aaa}$ follows from Theorem~\ref{norm}, properties (P2), (P3) and (P6) from Theorem~\ref{quasinorm}
and property (P4) from Theorem~\ref{T:P4}. It remains to verify property (P5). To this end, let $E\subset \RR$ be a $\mu$-measurable set such that $\mu(E)<\infty$ and let $f\in\M_+(\RR,\mu)$. Then, by (P5) for $\varrho$
and Proposition~\ref{XaembX},
\begin{equation*}
    \int_E f\,d\mu \le C_E \varrho(f) \le C_E \varrho^{\aaa}(f).
\end{equation*}

Conversely, assume that $\varrho^{\aaa}$ is an r.i.~norm. Then it satisfies (P4). Hence, the conclusion follows from~Theorem~\ref{T:P4}. The proof is complete.
\end{proof}

We shall return to the question of necessity of the assumption $\alpha\in[1,\infty)$ in Theorem~\ref{XaBFS} in the subsequent section in Remark~\ref{R:alpha}.

\section{Embeddings}\label{S:embeddings}

Our next aim is to study embedding relations between the structures of the form $X^{\aaa}$, where $X=X_{\varrho}$ for some given functional $\varrho$ and $X^{\aaa}=X_{\varrho^{\aaa}}$. This fact will entail a slight change of our point of view. So far our main focus was concentrated on norms whereas from this section on the main object of our research will be spaces. In correspondence with this intention we shall, from this section on, mostly work with $X$, $X^{\aaa}$ and $\overline{X}$ rather than with $\varrho$, $\varrho^{\aaa}$ and $\overline{\varrho}$, respectively.

We begin with a~simple nesting property.

\begin{proposition}
Let $X$ be a rearrangement-invariant space over $(\RR,\mu)$, $\alpha_1,\alpha_2\in(0,\infty)$ and $\alpha_1\leq\alpha_2$. Then $\|f\|_{X^{\langle\alpha_1\rangle}} \le \|f\|_{{X^{\langle\alpha_2\rangle}}}$ for every $f\in\M(\RR,\mu)$.
\end{proposition}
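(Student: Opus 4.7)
The plan is to establish the pointwise estimate
\begin{equation*}
    \left((|f|^{\alpha_1})^{**}(t)\right)^{\frac{1}{\alpha_1}} \le \left((|f|^{\alpha_2})^{**}(t)\right)^{\frac{1}{\alpha_2}} \quad\text{for every $t\in(0,\mu(\RR))$,}
\end{equation*}
and then to invoke the monotonicity axiom (P2) for $\overline{\varrho}$ together with the definition of $\|\cdot\|_{X^{\aaa}}$. In particular, because $\|\cdot\|_{X^{\aaa}}$ only sees $|f|$, I may assume $f\in\mathscr{M}_{+}(\RR,\mu)$ throughout.

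To see the pointwise estimate, I first use the identity $(|f|^{\alpha})^{*}=(f^{*})^{\alpha}$ from Remark~\ref{propsf**} to rewrite
\begin{equation*}
    \left((|f|^{\alpha})^{**}(t)\right)^{\frac{1}{\alpha}} = \left(\frac{1}{t}\int_{0}^{t}f^{*}(s)^{\alpha}\,ds\right)^{\frac{1}{\alpha}}.
\end{equation*}
Viewing $\frac{ds}{t}$ as a probability measure on $(0,t)$, the right-hand side is exactly the $L^{\alpha}$-mean of the restriction of $f^{*}$ to $(0,t)$ against a probability measure. The classical power-mean inequality (equivalently, Jensen's inequality applied to the convex function $x\mapsto x^{\alpha_2/\alpha_1}$ on $[0,\infty)$) asserts that $L^{\alpha}$-means over a probability space are non-decreasing in $\alpha$, and this is precisely the desired pointwise estimate.

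The only minor care needed is the possibility of infinite values, which is not a real obstacle: if the right-hand side of the pointwise estimate is $\infty$ the inequality is automatic, while if it is finite the elementary bound $f^{*}(s)^{\alpha_1}\le f^{*}(s)^{\alpha_2}+1$ integrated over $(0,t)$ shows the left-hand side is finite too, so power-mean monotonicity applies. Having the pointwise inequality in hand, (P2) for $\overline{\varrho}$ delivers $\overline{\varrho}(((|f|^{\alpha_1})^{**})^{1/\alpha_1})\le\overline{\varrho}(((|f|^{\alpha_2})^{**})^{1/\alpha_2})$, which is the claim. I do not foresee any genuine difficulty; the whole statement reduces to the monotonicity of $L^{p}$-means over probability spaces combined with a single monotonicity axiom of $\overline{\varrho}$.
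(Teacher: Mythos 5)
Your proof is correct and follows essentially the same route as the paper: the paper likewise applies Jensen's inequality to the convex function $t\mapsto t^{\frac{\alpha_2}{\alpha_1}}$ against the probability measure $\tfrac{ds}{t}$ on $(0,t)$ to obtain the pointwise comparison of the averaged powers, and then invokes (P2) for $\|\cdot\|_{\overline{X}(0,\mu(\RR))}$. Your additional remark about infinite values is a harmless extra precaution not spelled out in the paper.
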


\begin{proof}
The function $t\mapsto t^{\frac{\alpha_2}{\alpha_1}}$ is convex on $(0,\mu(\RR))$. Hence, by Jensen's inequality and (P2) for~$\|\cdot\|_{\overline{X}(0,\mu(\RR))}$ restricted to non-negative functions, we have
\begin{align*}
\|f\|_{X^{\langle\alpha_1\rangle}}
&=
\left\|\left(\frac1t\int_0^t f^{*}(s)^{\alpha_1}\,ds\right)^{\frac{1}{\alpha_1}}\right\|_{\overline{X}(0,\mu(\RR))}
\leq
\left\|\left(\frac1t\int_0^t f^{*}(s)^{\alpha_2}\,ds\right)^{\frac{1}{\alpha_2}}\right\|_{\overline{X}(0,\mu(\RR))}
=
\|f\|_{{X^{\langle\alpha_2\rangle}}}.
\end{align*}
\end{proof}

We next point out certain stability property with respect to continuous embeddings.

\begin{proposition}
Let $X,Y$ be rearrangement-invariant spaces over $(\RR,\mu)$ such that $\|f\|_{X} \le \|f\|_{Y}$ for every $f\in\M(\RR,\mu)$ and let $\alpha\in(0,\infty)$. Then $\|f\|_{X^{\langle\alpha\rangle}} \le \|f\|_{{Y^{\langle\alpha\rangle}}}$ for every $f\in\M(\RR,\mu)$.
\end{proposition}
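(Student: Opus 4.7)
The plan is to reduce the inequality $\|f\|_{X^{\aaa}} \le \|f\|_{Y^{\aaa}}$ to a comparison of the representation norms $\overline{\varrho}_X$ and $\overline{\varrho}_Y$, applied to the single non-negative non-increasing function that governs both quantities, namely $h := ((|f|^{\alpha})^{**})^{\frac{1}{\alpha}}$. By definition,
\begin{equation*}
    \|f\|_{X^{\aaa}} = \overline{\varrho}_X(h) \quad\text{and}\quad \|f\|_{Y^{\aaa}} = \overline{\varrho}_Y(h),
\end{equation*}
so it suffices to show that $\overline{\varrho}_X(h) \le \overline{\varrho}_Y(h)$ for every non-negative non-increasing $h$ on $(0,\mu(\RR))$ (by Remark~\ref{propsf**}, the specific $h$ above has exactly this form).

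The main step is therefore to transfer the hypothesis $\|u\|_X \le \|u\|_Y$, which is stated for functions $u\in\M(\RR,\mu)$, to an analogous inequality between the representation norms. Given a non-negative non-increasing function $h$ on $(0,\mu(\RR))$, I would use non-atomicity and $\sigma$-finiteness of $(\RR,\mu)$ to produce a $\mu$-measurable function $g$ on $\RR$ whose non-increasing rearrangement $g^{*}$ equals $h$ (this is the standard equimeasurability construction; since $h$ is already non-increasing, it is in fact its own rearrangement). Then the defining property of the representation norm gives
\begin{equation*}
    \overline{\varrho}_X(h) = \overline{\varrho}_X(g^{*}) = \|g\|_X \le \|g\|_Y = \overline{\varrho}_Y(g^{*}) = \overline{\varrho}_Y(h),
\end{equation*}
which is the desired comparison.

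Applying this inequality to $h = ((|f|^{\alpha})^{**})^{\frac{1}{\alpha}}$ yields $\|f\|_{X^{\aaa}} \le \|f\|_{Y^{\aaa}}$ for every $f\in\M(\RR,\mu)$, completing the argument. I do not foresee any genuine obstacle here: the only point requiring care is the passage from the hypothesis on $X,Y$ to the representation norms, and this rests purely on the standard uniqueness and equimeasurability package recorded in the preliminaries (\textit{cf.}~\cite[Chapter~2, Theorem~4.10]{BS}).
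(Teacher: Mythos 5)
Your argument is correct and is precisely the one the paper has in mind: the paper's proof reads only ``the assertion follows immediately from the definitions,'' and your reduction to the representation norms via a function $g$ with $g^{*}=h$ (using non-atomicity, exactly as in the paper's proof of Theorem~\ref{T:boundedness-of-M}) is the natural way to fill in that one-line claim. No gaps; you have simply made explicit the transfer of the hypothesis $\|\cdot\|_X\le\|\cdot\|_Y$ to the inequality $\overline{\varrho}_X(h)\le\overline{\varrho}_Y(h)$ for non-increasing $h$, which the author leaves implicit.
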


\begin{proof}
The assertion follows immediately from the definitions.
\end{proof}

\begin{rem}
It follows from Remark~\ref{XaembX} that $X^{\aaa}\hookrightarrow X$ for every r.i.~space $X$ and every $\alpha\in(0,\infty)$. We shall now investigate the question, under which additional
conditions the converse embedding, namely $X\hookrightarrow X^{\aaa}$, holds. In view of Remark~\ref{XaembX}, in such case we in fact have $X=X^{\aaa}$. The question of
characterizing such situations is of a~considerable interest in applications, see e.g.~\cite[proof of Theorem~3.1]{CPS-frostman-2}.
\end{rem}

We start by pointing out that an~interesting and~useful characterization is available, expressed in terms of boundedness of the Hardy averaging operator on an appropriate space involving functions defined on an interval. For $h\in\M_+(0,\mu(\RR))$, we define the averaged function, $Ah$, by
\begin{equation*}
    Ah(t)=\frac{1}{t}\int_{0}^{t} h(s) \,d s \quad \text{for $t\in (0,\mu(\RR))$.}
\end{equation*}
 By $A$ we denote the operator which associates every admissible function $h$ with $Ah$. This operator is then called the \textit{Hardy averaging operator}.

\begin{thm}\label{T:boundedness-of-M}
Let $X$ be a rearrangement-invariant space over $(\RR,\mu)$ and let $\alpha\in(0,\infty)$. Then $X\hookrightarrow X^{\aaa}$ if and only if there exists a positive constant $\kappa$ such that
\begin{equation}\label{E:boundedness-of-M}
    \|Ah\|_{\overline{X}^{\left\{\frac{1}{\alpha}\right\}}(0,\mu(\RR))} \le \kappa \|h\|_{\overline{X}^{\left\{\frac{1}{\alpha}\right\}}(0,\mu(\RR))}
\end{equation}
for every non-increasing $h\in\M_+(0,\mu(\RR))$. Moreover, if $\kappa$ is the optimal constant in~\eqref{E:boundedness-of-M} and $c$ is the norm of the embedding $X\hookrightarrow X^{\aaa}$, then $\kappa= c^{\alpha}$.
\end{thm}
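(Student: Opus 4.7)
The plan is to recognize the statement as essentially a tautology once we unpack the definitions in the right variables. The substitution $h=(f^*)^{\alpha}$ converts the embedding inequality $\|f\|_{X^{\aaa}}\le c\|f\|_X$ into the boundedness inequality for $A$ on a non-increasing function, and vice versa.

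First I would rewrite both norms in terms of the functional $\overline{\varrho}^{\{1/\alpha\}}$. By the definitions,
\begin{equation*}
    \|g\|_{\overline{X}^{\{1/\alpha\}}(0,\mu(\RR))}=\overline{\varrho}\bigl(g^{1/\alpha}\bigr)^{\alpha}
    \quad\text{for } g\in\M_+(0,\mu(\RR)),
\end{equation*}
so, plugging in $g=(f^{*})^{\alpha}$, one gets $\|f\|_X^{\alpha}=\overline{\varrho}(f^{*})^{\alpha}=\|(f^{*})^{\alpha}\|_{\overline{X}^{\{1/\alpha\}}(0,\mu(\RR))}$. Similarly, using $(f^{\alpha})^{*}=(f^{*})^{\alpha}$ (Remark~\ref{propsf**}),
\begin{equation*}
    (f^{\alpha})^{**}(t)=\frac1t\int_0^{t}(f^{*})^{\alpha}(s)\,ds=A\bigl[(f^{*})^{\alpha}\bigr](t),
\end{equation*}
which gives $\|f\|_{X^{\aaa}}^{\alpha}=\bigl\|A\bigl[(f^{*})^{\alpha}\bigr]\bigr\|_{\overline{X}^{\{1/\alpha\}}(0,\mu(\RR))}$. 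Thus, for every $f\in\M(\RR,\mu)$, writing $h=(f^{*})^{\alpha}$, which is a non-increasing element of $\M_+(0,\mu(\RR))$, we have the identity
\begin{equation*}
    \|f\|_{X^{\aaa}}^{\alpha}=\|Ah\|_{\overline{X}^{\{1/\alpha\}}(0,\mu(\RR))}
    \quad\text{and}\quad
    \|f\|_X^{\alpha}=\|h\|_{\overline{X}^{\{1/\alpha\}}(0,\mu(\RR))}.
\end{equation*}

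For the implication that boundedness of $A$ implies the embedding, I would simply apply \eqref{E:boundedness-of-M} with $h=(f^{*})^{\alpha}$ (which is non-increasing) and take $\alpha$-th roots, obtaining $\|f\|_{X^{\aaa}}\le\kappa^{1/\alpha}\|f\|_X$. For the reverse direction, given an arbitrary non-increasing $h\in\M_+(0,\mu(\RR))$, I would use that $(\RR,\mu)$ is non-atomic $\sigma$-finite to produce an $f\in\M(\RR,\mu)$ with $f^{*}=h^{1/\alpha}$ (a standard layer-cake/filling argument, available because $h^{1/\alpha}$ is non-increasing and non-negative on $(0,\mu(\RR))$). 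Then the identity above gives $\|Ah\|_{\overline{X}^{\{1/\alpha\}}}=\|f\|_{X^{\aaa}}^{\alpha}\le c^{\alpha}\|f\|_X^{\alpha}=c^{\alpha}\|h\|_{\overline{X}^{\{1/\alpha\}}}$, proving \eqref{E:boundedness-of-M} with constant $c^{\alpha}$.

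Finally, the relation $\kappa=c^{\alpha}$ between the optimal constants follows because the correspondence $f\mapsto(f^{*})^{\alpha}$ is a bijection (up to equimeasurability) between $\M(\RR,\mu)$ and the cone of non-negative non-increasing functions on $(0,\mu(\RR))$, and both sides of the two identities above transform in a matching way. Concretely, the infimum of admissible $c$'s in the embedding equals the $\alpha$-th root of the infimum of admissible $\kappa$'s in \eqref{E:boundedness-of-M}.

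I do not anticipate a serious obstacle: the only point that requires a word is the existence of $f\in\M(\RR,\mu)$ with prescribed non-increasing rearrangement $h^{1/\alpha}$, which is classical for non-atomic $\sigma$-finite measure spaces (see, e.g., \cite[Chapter~2]{BS}). Everything else is bookkeeping with the definition of $\overline{X}^{\{1/\alpha\}}$ and the equality $(f^{\alpha})^{**}=A[(f^{*})^{\alpha}]$.
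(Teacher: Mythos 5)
Your proposal is correct and follows essentially the same route as the paper: both proofs hinge on the identity $(f^{\alpha})^{**}=A[(f^{*})^{\alpha}]$, the substitution $h=(f^{*})^{\alpha}$ together with raising to the power $\alpha$ to pass to the $\overline{X}^{\{1/\alpha\}}$-norm, and the existence (by non-atomicity) of a function with prescribed non-increasing rearrangement for the converse direction. The bookkeeping of the optimal constants $\kappa=c^{\alpha}$ is likewise identical.
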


\begin{proof}
The embedding $X\hookrightarrow X^{\aaa}$ holds if and only if there exists a positive constant $c$ such that
\begin{equation*}
    \|f\|_{X^{\aaa}} \le c \|f\|_{X} \quad\text{for every $f\in\M(\RR,\mu)$},
\end{equation*}
that is,
\begin{equation*}
    \left\|\left(\frac{1}{t}\int_{0}^{t}f^{*}(s)^{\alpha}\,ds\right)^{\frac{1}{\alpha}}\right\|_{\overline{X}(0,\mu(\RR))} \le c \|f^{*}\|_{\overline{X}(0,\mu(\RR))} \quad\text{for every $f\in\M(\RR,\mu)$},
\end{equation*}
A simple substitution shows that the last inequality is equivalent to saying that
\begin{equation*}
    \left\|\left(\frac{1}{t}\int_{0}^{t}g^{*}(s)\,ds\right)^{\frac{1}{\alpha}}\right\|_{\overline{X}(0,\mu(\RR))} \le c \|(g^{*})^{\frac{1}{\alpha}}\|_{\overline{X}(0,\mu(\RR))} \quad\text{for every $g\in\M(\RR,\mu)$},
\end{equation*}
Raising this to $\alpha$ we get
\begin{equation*}
    \|Ag^{*}\|_{\overline{X}^{\left\{\frac{1}{\alpha}\right\}}(0,\mu(\RR))} \le c^{\alpha} \|g^{*}\|_{\overline{X}^{\left\{\frac{1}{\alpha}\right\}}(0,\mu(\RR))}\quad\text{for every $g\in\M(\RR,\mu)$,}
\end{equation*}
and it just remains to realize that, given a non-increasing $h\in\M_+(0,\mu(\RR))$, we can always find a function $g\in\M(\RR,\mu)$ such that $g^{*}=h$ thanks to the fact that the $(\RR,\mu)$ is non-atomic (see e.g.~\cite[Corollary~7.8]{BS} or~\cite[Lemma~2.2]{MO}).
\end{proof}

\begin{thm}\label{T:upper-norm}
Let $X$ be a rearrangement-invariant space over $(\RR,\mu)$ and let $\alpha\in(0,1)$. Then
\begin{equation*}
    \|f\|_{X^{\aaa}} \le \left(\frac{1}{1-\alpha}\right)^{\frac{1}{\alpha}}\|f\|_{X}\quad\text{for every $f\in\M(\RR,\mu)$.}
\end{equation*}
\end{thm}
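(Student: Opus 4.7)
The plan is to combine the classical Hardy integral inequality with exponent $p=\frac1\alpha>1$ together with the Hardy--Littlewood--P\'olya principle. The key observation is that the target constant $\bigl(\frac1{1-\alpha}\bigr)^{\frac1\alpha}$ is precisely the Hardy constant $\bigl(\frac{p}{p-1}\bigr)^{p}$ for $p=\frac1\alpha$, which strongly suggests this route.

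First, writing $g=f^{*}$ I would reduce the claim. Since $g$ is non-increasing and $(|f|^{\alpha})^{*}=(f^{*})^{\alpha}=g^{\alpha}$ by Remark~\ref{propsf**}, the target estimate becomes
\begin{equation*}
    \overline{\varrho}\!\left(\bigl((g^{\alpha})^{**}\bigr)^{\frac1\alpha}\right)\le\frac{1}{(1-\alpha)^{\frac1\alpha}}\,\overline{\varrho}(g).
\end{equation*}
Note that $\bigl((g^{\alpha})^{**}\bigr)^{\frac1\alpha}$ is non-increasing on $(0,\mu(\RR))$ (being the $\frac1\alpha$-power of a non-increasing function), so it agrees with its own non-increasing rearrangement, and the same is true for $g$. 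Hence, by the Hardy--Littlewood--P\'olya principle, it is enough to verify the integrated inequality
\begin{equation*}
    \int_{0}^{t}\bigl((g^{\alpha})^{**}(s)\bigr)^{\frac1\alpha}\,ds \le \frac{1}{(1-\alpha)^{\frac1\alpha}}\int_{0}^{t}g(s)\,ds \qquad\text{for every }t\in(0,\mu(\RR)).
\end{equation*}

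Next I would obtain this integrated inequality from the classical Hardy inequality applied with exponent $p=\frac1\alpha>1$ to the non-negative function $h=g^{\alpha}$ on the finite interval $(0,t)$ (which gives the same constant $(p/(p-1))^{p}$ as on $(0,\infty)$, since truncating $h$ by zero outside $(0,t)$ leaves $Ah$ unchanged on $(0,t)$). With the identifications $Ah=(g^{\alpha})^{**}$ and $h^{p}=g$, the classical Hardy inequality yields precisely
\begin{equation*}
    \int_{0}^{t}\bigl((g^{\alpha})^{**}(s)\bigr)^{\frac1\alpha}\,ds \le \left(\frac{1/\alpha}{1/\alpha-1}\right)^{\frac1\alpha}\!\int_{0}^{t}g(s)\,ds = \frac{1}{(1-\alpha)^{\frac1\alpha}}\int_{0}^{t}g(s)\,ds,
\end{equation*}
establishing the required pointwise integrated domination.

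Finally, with the Hardy--Littlewood--P\'olya relation $\bigl((g^{\alpha})^{**}\bigr)^{\frac1\alpha}\prec \frac{1}{(1-\alpha)^{1/\alpha}}g$ in hand, an application of the HLP principle to the r.i.~norm $\overline{\varrho}$ delivers the claim. The main (essentially only) obstacle is recognizing that $p=\frac1\alpha$ is the correct Hardy exponent, after which the remainder is a routine combination of two standard tools; the only small care required is to ensure that the truncation of Hardy's inequality to $(0,t)$ preserves the sharp constant so that the HLP comparison holds for every $t\in(0,\mu(\RR))$, which is immediate from the fact that $(Ah)(s)$ depends only on values of $h$ in $(0,s)\subset(0,t)$.
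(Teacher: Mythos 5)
Your proposal is correct and follows essentially the same route as the paper's proof: the classical Hardy inequality with exponent $\frac{1}{\alpha}$ applied to $h=(|f|^{\alpha})^{*}=(f^{*})^{\alpha}$, yielding the integrated inequality on each $(0,t)$, followed by the Hardy--Littlewood--P\'olya principle for the r.i.~norm $\overline{\varrho}$. The constant bookkeeping and the observation that both compared functions are non-increasing are exactly as in the paper.
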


\begin{proof}
Assume that $f\in\M(\RR,\mu)$ and fix $t\in(0,\mu(\RR))$. The classical Hardy inequality asserts that
\begin{equation*}
    \int_{0}^{t}\left(\frac{1}{s}\int_{0}^{s}h(\tau)\,d\tau\right)^{\frac{1}{\alpha}}\,ds
        \le \left(\frac{1}{1-\alpha}\right)^{\frac{1}{\alpha}} \int_{0}^{t}h(s)^{\frac{1}{\alpha}}\,ds
            \quad\text{for every $h\in\M_+(0,\mu(\RR))$.}
\end{equation*}
Applying this, in particular, to $h=(|f|^{\alpha})^{*}$, we obtain
\begin{equation*}
    \int_{0}^{t}\left(\frac{1}{s}\int_{0}^{s}f^*(\tau)^{\alpha}\,d\tau\right)^{\frac{1}{\alpha}}\,ds
        \le \left(\frac{1}{1-\alpha}\right)^{\frac{1}{\alpha}} \int_{0}^{t}f^*(s)\,ds.
\end{equation*}
Both the functions $s\mapsto \left(\frac{1}{s}\int_{0}^{s}f^*(\tau)^{\alpha}\,d\tau\right)^{\frac{1}{\alpha}}$ and $s\mapsto f^*(s)$ are obviously non-increasing. Thus, we get, by the Hardy--Littlewood--P\'olya principle, \begin{equation*}
    \left\|\left(\frac{1}{s}\int_{0}^{s}f^*(\tau)^{\alpha}\,d\tau\right)^{\frac{1}{\alpha}}\right\|_{\overline{X}(0,\mu(\RR))}
        \le \left(\frac{1}{1-\alpha}\right)^{\frac{1}{\alpha}} \|f^{*}\|_{\overline{X}(0,\mu(\RR))},
\end{equation*}
and the assertion follows.
\end{proof}

\begin{rem}\label{R:alpha}
If $X$ is a rearrangement-invariant space over $(\RR,\mu)$ and $\alpha\in(0,1)$, then $X=X^{\aaa}$. Indeed, by Proposition~\ref{XaembX} and Theorem~\ref{T:upper-norm}, we have
\begin{equation*}
    \|f\|_{X} \le\|f\|_{X^{\aaa}} \le  \left(\frac{1}{1-\alpha}\right)^{\frac{1}{\alpha}}\|f\|_{X} \quad\text{for every $f\in\M(\RR,\mu)$.}
\end{equation*}
In particular, $X^{\aaa}$ is equivalent to a rearrangement-invariant space. For $\alpha\in[1,\infty)$, this is not necessarily true. Note that this fact complements the result of Theorem~\ref{XaBFS}.
\end{rem}

We shall now give several examples describing the action of the operation $X\mapsto X^{\aaa}$ on some important scales of function spaces.

\begin{example}[Lebesgue spaces]
Let $p\in[1,\infty]$ and $\alpha\in(0,\infty)$. Then
\begin{equation*}
(L\sp p)^{\aaa}=L\sp p\quad\text{if and only if $\alpha\in(0,p)$.}
\end{equation*}
In particular,
\begin{equation*}
    (L\sp{\infty})^{\aaa}=L\sp{\infty}\quad\text{for any $\alpha\in(0,\infty)$.}
\end{equation*}
Indeed, if $X=L\sp{p}$ and $\alpha\in(0,\infty)$, then a simple calculation shows that
\begin{equation*}
\overline{X}\sp{\left\{\frac{1}{\alpha}\right\}}(0,\mu(\RR)) = L^{\frac{p}{\alpha}}(0,\mu(\RR)).
\end{equation*}
Hence the condition~\eqref{E:boundedness-of-M} is satisfied if and only if the Hardy averaging operator is bounded from $L^{\frac{p}{\alpha}}(0,\mu(\RR))$ into itself, which is known to be true if and only if $\frac{p}{\alpha}>1$. The claim thus follows from Theorem~\ref{T:boundedness-of-M} and Proposition~\ref{XaembX}.
\end{example}

We shall now turn our attention to the cases when $\alpha\in[p,\infty)$, in which, interestingly, the situation is considerably different. To begin, note that, unlikely in the subcritical case $\alpha\in(0,p)$, in both the critical case $\alpha=p$ and the supercritical case $\alpha\in(p,\infty)$ we have to assume that $\mu(\RR)$ is finite. This is caused by the fact, which follows from Proposition~\ref{P:triviality-lebesgue}, that if $\mu(\RR)=\infty$ and $\alpha\in[p,\infty)$, then $(L^{p})^{\aaa}$ is trivial.

\begin{example}[Lebesgue spaces, critical and supercritical cases]\label{EX:lebesgue-limiting}
Let $\mu(\RR)<\infty$, $p\in[1,\infty)$ and $\alpha\in[p,\infty)$. Then
\begin{equation}\label{E:lebesgue-limiting}
    (L\sp p)^{\left\langle  \alpha \right\rangle}
        =
        \begin{cases}
            L\sp p(\log L)^\frac1p &\text{if $\alpha=p$},
                \\
            L^{\alpha} &\text{if $\alpha\in(p,\infty)$}.
        \end{cases}
\end{equation}
To prove the first claim in~\eqref{E:lebesgue-limiting}, it suffices, in view of the above-mentioned relations, to show that the functionals $f\mapsto\|f\|_{(L^{p})^{\aaa}}$ and $f\mapsto\|f\|_{L^{p,p;\frac{1}{p}}}$ are comparable on $\M(\RR,\mu)$. To this end, we note that, by the Fubini theorem, we have, for every $f\in\M(\RR,\mu)$,
\begin{align*}
    \|f\|_{(L^{p})^{\aaa}}
    & =
    \left\|\left(\left(|f|\sp p\right)\sp{**}\right)\sp{\frac1p}\right\|_{L^{p}(0,\mu(\RR))}
    =
    \left(\int_0\sp{\mu(\RR)}\frac1t\int_0\sp tf\sp*(s)\sp p\,ds\,dt\right)\sp{\frac1p}
            \\
        & =
    \left(\int_0\sp{\mu(\RR)}f\sp*(s)\sp p\log\tfrac{\mu(\RR)}s\,ds\right)\sp{\frac1p}.
\end{align*}
Thus,
\begin{equation*}
    \|f\|_{(L^{p})^{\ppp}} \le \|f\|_{L^{p,p;\frac{1}{p}}}.
\end{equation*}
On the other hand, by changing variables and using the monotonicity of $f^{*}$, we get
\begin{align*}
    \|f\|_{(L^{p})^{\ppp}}
    & \ge
    \left(\int_0\sp{\frac{\mu(\RR)}{e}}f\sp*(s)\sp p\log\tfrac{\mu(\RR)}s\,ds\right)\sp{\frac1p}
    \ge
    \left(\frac{1}{e}\int_0\sp{\mu(\RR)}f\sp*\left(\tfrac{s}{e}\right)\sp p\log\tfrac{e\mu(\RR)}s\,ds\right)\sp{\frac1p}
        \\
    & \ge
    \left(\frac{1}{e}\int_0\sp{\mu(\RR)}f\sp*\left(s\right)\sp p\log\tfrac{e\mu(\RR)}s\,ds\right)\sp{\frac1p} = e^{-\frac{1}{p}} \|f\|_{L^{p,p;\frac{1}{p}}}.
\end{align*}
Altogether,
\begin{equation*}
    e^{-\frac{1}{p}} \|f\|_{L^{p,p;\frac{1}{p}}}\le  \|f\|_{(L^{p})^{\ppp}}  \le \|f\|_{L^{p,p;\frac{1}{p}}},
\end{equation*}
which yields the desired relation. Now assume that $\alpha\in(p,\infty)$. In order to prove the second claim in~\eqref{E:lebesgue-limiting}, we now have to verify that the functionals $f\mapsto\|f\|_{(L^{p})^{\aaa}}$ and $f\mapsto\|f\|_{L^{\alpha}}$ are comparable on $\M(\RR,\mu)$. One has
\begin{align*}
    \|f\|_{(L^{p})^{\aaa}}
    & =
    \left(\int_0\sp{\mu(\RR)}\left(\frac1t\int_0\sp tf\sp*(s)\sp {\alpha}\,ds\right)^{\frac{p}{\alpha}}\,dt\right)\sp{\frac1p}
        \\
    & \le
    \left(\int_0\sp {\mu(\RR)}f\sp*(s)\sp {\alpha}\,ds\right)^{\frac{1}{\alpha}}
    \left(\int_0\sp{\mu(\RR)}t^{-\frac{p}{\alpha}}\, dt\right)\sp{\frac1p}
        \\
    & =
    \left(\frac{\alpha}{\alpha-p}\right)^{\frac{1}{p}}\mu(\RR)^{\frac{1}{p}-\frac{1}{\alpha}}
    \left(\int_0\sp {\mu(\RR)}f\sp*(s)\sp {\alpha}\,ds\right)^{\frac{1}{\alpha}}.
\end{align*}
Conversely, using~\eqref{E:rozpulil-pess}, we get
\begin{align*}
    \|f\|_{(L^{p})^{\aaa}}
    & \ge
    \left(\int_{\frac{\mu(\RR)}{2}}\sp{\mu(\RR)}\left(\frac1t\int_0\sp tf\sp*(s)\sp {\alpha}\,ds\right)^{\frac{p}{\alpha}}\,dt\right)\sp{\frac1p}
        \\
    & \ge
    \left(\int_0\sp {\frac{\mu(\RR)}{2}}f\sp*(s)\sp {\alpha}\,ds\right)^{\frac{1}{\alpha}}
    \left(\int_{\frac{\mu(\RR)}{2}}\sp{\mu(\RR)}t^{-\frac{p}{\alpha}}\, dt\right)\sp{\frac1p}
        \\
    & \ge
    \left(\frac{\alpha}{\alpha-p}\right)^{\frac{1}{p}}
    \left(1-2^{\frac{p-\alpha}{\alpha}}\right)^{\frac{1}{p}}\mu(\RR)^{\frac{1}{p}-\frac{1}{\alpha}}
   \left(\int_0\sp {\frac{\mu(\RR)}{2}}f\sp*(s)\sp {\alpha}\,ds\right)^{\frac{1}{\alpha}}
        \\
    & \ge
    \left(\frac{\alpha}{\alpha-p}\right)^{\frac{1}{p}}2^{-\frac{1}{\alpha}}
    \left(1-2^{\frac{p-\alpha}{\alpha}}\right)^{\frac{1}{p}}\mu(\RR)^{\frac{1}{p}-\frac{1}{\alpha}}
    \left(\int_0\sp {\mu(\RR)}f\sp*(s)\sp {\alpha}\,ds\right)^{\frac{1}{\alpha}},
\end{align*}
and the second claim in~\eqref{E:lebesgue-limiting} follows on combining the last two estimates.
\end{example}

\begin{example}[Lorentz spaces]\label{EX:lorentz}
Let either $p=q=1$ or $p=q=\infty$ or $p\in(1,\infty)$, $q\in[1,\infty]$. Let $\alpha\in(0,\infty)$. If $\mu(\RR)<\infty$, then
\begin{equation}\label{E:lorentz-examples-1}
    (L\sp {p,q})^{\aaa}=
        \begin{cases}
            L\sp{p,q} &\text{if $\alpha\in(0,p)$,}
                \\
            Y_{p,q} &\text{if $\alpha=p$,}
                \\
            L^{\alpha} &\text{if $\alpha\in(p,\infty)$,}
        \end{cases}
\end{equation}
where $Y_{p,q}$ is the collection of all $f\in\M_0(\RR,\mu)$ such that $\|f\|_{Y_{p,q}}<\infty$, where
\begin{equation}\label{E:Y-pq}
   \|f\|_{Y_{p,q}}=
        \begin{cases}
            \left(\int_0\sp{\mu(\RR)}\left(\int_0\sp t f\sp*(s)\sp p\,ds\right)\sp{\frac qp}\frac{dt}t\right)\sp{\frac1q}
            &\text{if $q\in[1,\infty)$,}
                \\
            \|f\|_{L^{p}}&\text{if $q=\infty$}
        \end{cases}
\end{equation}
for $f\in\M(\RR,\mu)$. If $\mu(\RR)=\infty$, then
\begin{equation}\label{E:lorentz-examples-2}
    (L\sp {p,q})^{\aaa}=
        \begin{cases}
            L\sp{p,q} &\text{if $\alpha\in(0,p)$,}
                \\
            L^{p} &\text{if $\alpha=p$ and $q=\infty$,}
                \\
             \{0\} &\text{if either $\alpha=p$ and $q\in[1,\infty)$ or $\alpha\in(p,\infty)$.}
          \end{cases}
\end{equation}
Indeed, if $X=L\sp{p,q}$ and $\alpha\in(0,\infty)$, then a simple calculation shows that
\begin{equation*}
\overline{X}\sp{\left\{\frac{1}{\alpha}\right\}} (0,\mu(\RR)) = L^{\frac{p}{\alpha},\frac{q}{\alpha}}(0,\mu(\RR)).
\end{equation*}
Assume first that $\alpha\in(0,p)$. Then it is known ( cf.~e.g.~\cite[Chapter~4, Lemma~4.5]{BS}) that there exists a constant $\kappa$ such that
\begin{equation*}
    \|Ah\|_{L^{\frac{p}{\alpha},\frac{q}{\alpha}}(0,\mu(\RR))} \le \kappa \|h\|_{L^{\frac{p}{\alpha},\frac{q}{\alpha}}(0,\mu(\RR))}
\end{equation*}
for every non-increasing $h\in\M_+(0,\mu(\RR))$. This is true regardless of the finiteness of $\mu(\RR)$, hence the first claims in both~\eqref{E:lorentz-examples-1} and~\eqref{E:lorentz-examples-2} follow from Theorem~\ref{T:boundedness-of-M} and Proposition~\ref{XaembX}. If $\alpha=p$, $q\in[1,\infty)$ and $\mu(\RR)<\infty$, then the second claim in~\eqref{E:lorentz-examples-1} follows straightforward from the definitions. If $\alpha=p$ and $q=\infty$, then we have, for every $f\in\M(\RR,\mu)$,
\begin{align*}
    \|f\|_{(L^{p,\infty})^{\ppp}}
    &=
    \sup_{t\in(0,\mu(\RR))}t\sp{\frac1p}\left(\frac1t\int_0\sp t f\sp*(s)\sp p\,ds\right)\sp{\frac 1p}
    = \sup_{t\in(0,\mu(\RR))}\left(\int_0\sp t f\sp*(s)\sp p\,ds\right)\sp{\frac 1p}
        \\
    &=
    \left(\int_0\sp {\mu(\RR)} f\sp*(s)\sp p\,ds\right)\sp{\frac 1p} = \|f\|_{L^{p}},
\end{align*}
again regardless of the finiteness of $\mu(\RR)$. We have thus verified the second claims in~\eqref{E:lorentz-examples-1} and~\eqref{E:lorentz-examples-2}. Assume that $\mu(\RR)<\infty$ and $\alpha\in(p,\infty)$. Then
\begin{align*}
    \|f\|_{(L\sp {p,q})^{\aaa}}
        & = \left\|t^{\frac{1}{p}-\frac{1}{\alpha}-\frac{1}{q}}\left(\int_{0}^{t}f^{*}(s)^{\alpha}\,ds\right)^{\frac{1}{\alpha}}\right\|_{L^{q}(0,\mu(\RR))}
            \\
        & \le \left(\int_{0}^{\mu(\RR)}f^{*}(s)^{\alpha}\,ds\right)^{\frac{1}{\alpha}} \left\|t^{\frac{1}{p}-\frac{1}{\alpha}-\frac{1}{q}}\right\|_{L^{q}(0,\mu(\RR))}
        = C_{p,q,\alpha} \|f\|_{L^{\alpha}(0,\mu(\RR))}
\end{align*}
and, at the same time,
\begin{align*}
    \|f\|_{(L\sp {p,q})^{\aaa}}
        & \ge \left\|t^{\frac{1}{p}-\frac{1}{\alpha}-\frac{1}{q}}\left(\int_{0}^{t}f^{*}(s)^{\alpha}\,ds\right)^{\frac{1}{\alpha}}\right\|_{L^{q}(\frac{\mu(\RR)}{2},\mu(\RR))}
            \\
        & \ge \left(\int_{0}^{\frac{\mu(\RR)}{2}}f^{*}(s)^{\alpha}\,ds\right)^{\frac{1}{\alpha}} \left\|t^{\frac{1}{p}-\frac{1}{\alpha}-\frac{1}{q}}\chi_{(\frac{\mu(\RR)}{2},\mu(\RR))}(t)\right\|_{L^{q}(0,\mu(\RR))}
        \ge c_{p,q,\alpha} \|f\|_{L^{\alpha}(0,\mu(\RR))}
\end{align*}
in which $c_{p,q,\alpha}$ and $C_{p,q,\alpha}$ are finite positive constants depending only on the indicated parameters. This shows the third claim in~\eqref{E:lorentz-examples-1}. Finally, the third claim in~\eqref{E:lorentz-examples-2} follows from Proposition~\ref{P:triviality-lorentz}.

Note that when $q=p$, we recover the information from Example~\ref{EX:lebesgue-limiting}.
\end{example}

The spaces $Y_{p,q}$ that surfaced in the course of Example~\ref{EX:lorentz} are of independent interest. It turns out that for a fixed $p\in[1,\infty)$, the family $\{Y_{p,q}: q\in[p,\infty]\}$ forms an~important scale of function spaces that are not directly comparable to customary function spaces. Moreover, it represents a certain bridge from $L\sp p$ to $L\sp p(\log L)^\frac1p$. This issue will be discussed in the next section in detail.

\section{Two ways of bridging the gap}\label{S:bridge}

We shall assume throughout this section that $\mu(\RR)<\infty$. It follows from the results in the previous section that, for any $p\in[1,\infty)$, the spaces $L\sp p$ and $L\sp p(\log L)^\frac1p$ can be bridged by the scale of spaces $Y_{p,q}$ defined by~\eqref{E:Y-pq}, as $q$ is ranging from $p$ to $\infty$. The endpoint space of this scale obtained by putting $q=p$ gives $L\sp p(\log L)^\frac1p$, while, on the other side, setting $q=\infty$ we get the opposite endpoint, namely $L\sp p$. Since there is another natural bridge between these two spaces represented by the scale of Lorentz--Zygmund spaces $\{L^{p,p;\alpha}: \alpha\in[0,\frac{1}{p}]\}$ in which the
endpoint $\alpha=0$ corresponds to $L\sp p$ and the endpoint $\alpha=\frac1p$ corresponds to $L\sp p(\log L)^\frac1p$, it is of interest to investigate which relations hold between the spaces taking part in the two scales. Consider the diagram:

\begin{center}
\scalebox{.3}{%
\includegraphics{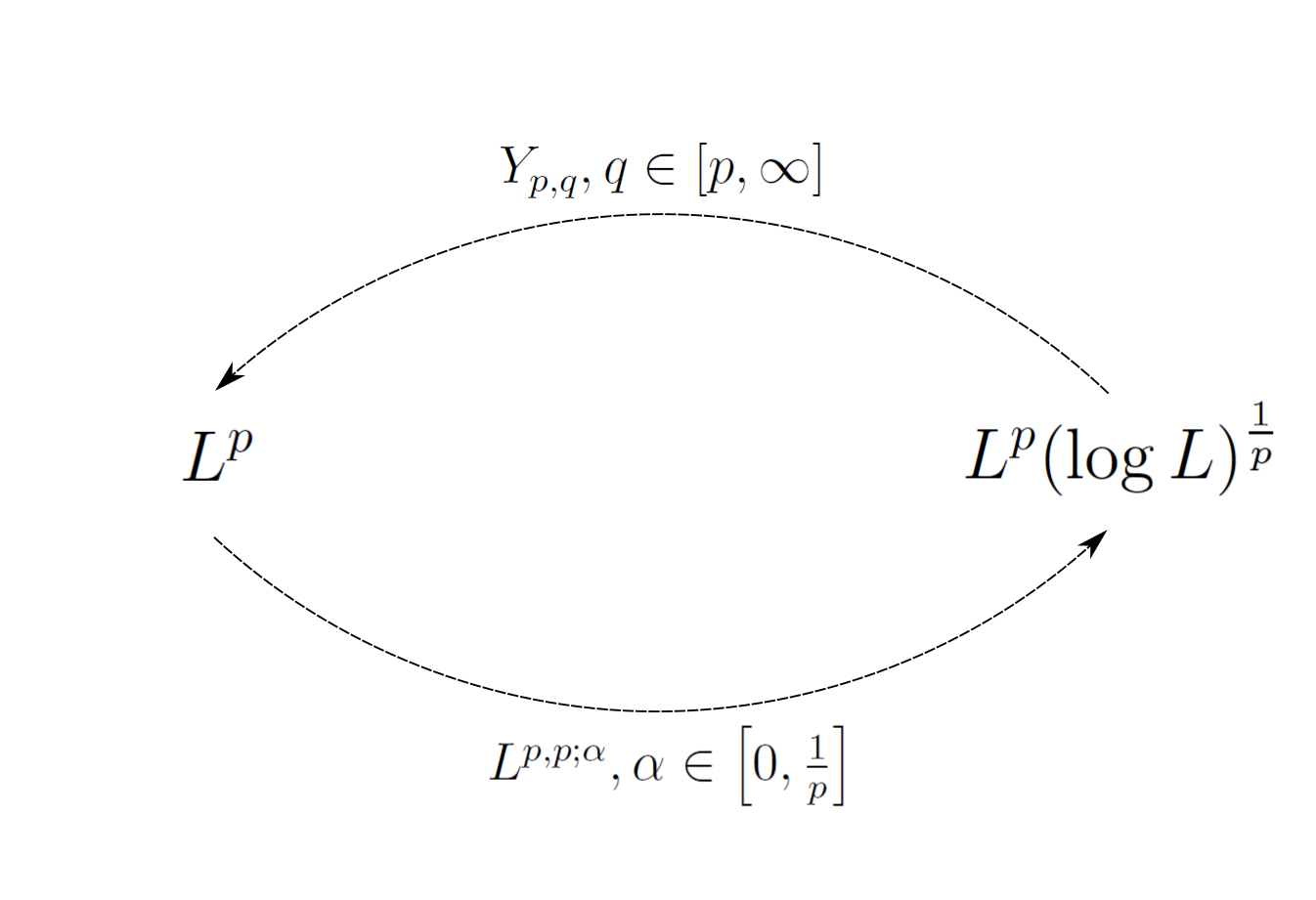}}
\end{center}

The answer to this question is provided by the following result which gives us a precise image of the positioning of a space $Y_{p,q}$ with respect to the scale $\{L^{p,p;\alpha}: \alpha\in[0,\frac{1}{p}]\}$.

\begin{thm}
Assume that $\mu(\RR)<\infty$. Let $p\in[1,\infty)$, $q\in (p,\infty)$ and $\alpha\in(0,\frac1q)$. Then
\begin{equation}\label{E:sandwich}
    L^{p,p;\frac{1}{q}} \hookrightarrow Y_{p,q}\hookrightarrow L^{p,p;\alpha},
\end{equation}
and neither of the embeddings in~\eqref{E:sandwich} can be reversed.
\end{thm}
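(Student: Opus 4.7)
My plan is to translate both inclusions in~\eqref{E:sandwich} into one-dimensional weighted inequalities for a single non-negative auxiliary function. Write $L=\mu(\RR)$, $g=(f^*)^p$, $r=q/p>1$, and introduce
\begin{equation*}
    \psi(v)=g(eLe^{-v})\cdot eLe^{-v}\quad\text{for $v\in(1,\infty)$.}
\end{equation*}
The changes of variable $s=eLe^{-v}$ and $t=eLe^{-u}$ yield, after a short computation,
\begin{equation*}
    \|f\|_{L^{p,p;1/q}}^p=\int_1^\infty \psi(v)v^{1/r}\,dv,\qquad
    \|f\|_{L^{p,p;\alpha}}^p=\int_1^\infty \psi(v)v^{\alpha p}\,dv,
\end{equation*}
and, setting $G(\tau)=\int_0^\tau g(s)\,ds$,
\begin{equation*}
    \|f\|_{Y_{p,q}}^q=\int_0^L G(\tau)^r\,\frac{d\tau}{\tau}
    =\int_1^\infty\left(\int_u^\infty\psi(v)\,dv\right)^r du.
\end{equation*}

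For the inclusion $L^{p,p;1/q}\hookrightarrow Y_{p,q}$ I would apply Minkowski's integral inequality to the kernel $K(u,v)=\chi_{[u,\infty)}(v)$ on $(1,\infty)\times(1,\infty)$, obtaining
\begin{equation*}
    \left(\int_1^\infty\left(\int_u^\infty\psi\,dv\right)^r du\right)^{1/r}
    \le\int_1^\infty\psi(v)(v-1)^{1/r}\,dv
    \le\int_1^\infty\psi(v)v^{1/r}\,dv.
\end{equation*}
Since the left-hand side equals $\|f\|_{Y_{p,q}}^{q/r}=\|f\|_{Y_{p,q}}^p$ and the right-hand side equals $\|f\|_{L^{p,p;1/q}}^p$, this is exactly $\|f\|_{Y_{p,q}}\le\|f\|_{L^{p,p;1/q}}$, with constant~$1$.

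For the inclusion $Y_{p,q}\hookrightarrow L^{p,p;\alpha}$ with $\alpha\in(0,\tfrac1q)$ I would work in the original variables. Setting $w(s)=\log^{\alpha p}(eL/s)$ and writing $w(s)=w(L)+\int_s^L(-w'(\tau))\,d\tau$ (legitimate since $\alpha>0$ forces $-w'\ge0$), Fubini's theorem gives
\begin{equation*}
    \int_0^L g(s)w(s)\,ds
    =G(L)+\alpha p\int_0^L G(\tau)\log^{\alpha p-1}(eL/\tau)\,\frac{d\tau}{\tau}.
\end{equation*}
H\"older's inequality with exponents $r$ and $r'$, followed by the substitution $u=\log(eL/\tau)$, estimates the second summand by
\begin{equation*}
    \int_0^L G(\tau)\log^{\alpha p-1}(eL/\tau)\,\frac{d\tau}{\tau}
    \le\|f\|_{Y_{p,q}}^p\,\left(\int_1^\infty u^{(\alpha p-1)r'}\,du\right)^{1/r'},
\end{equation*}
and the remaining integral is finite precisely when $(\alpha p-1)r'<-1$, i.e.\ when $\alpha<\tfrac1q$; this is the sole place where the hypothesis on $\alpha$ is used. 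The boundary term $G(L)$ is controlled via the doubling estimate~\eqref{E:rozpulil-pess}: since $g$ is non-increasing, $G(L)\le 2G(L/2)$, and since $G$ is itself non-decreasing,
\begin{equation*}
    G(L/2)^r\log 2\le\int_{L/2}^L G(\tau)^r\,\frac{d\tau}{\tau}\le\|f\|_{Y_{p,q}}^q,
\end{equation*}
so that $G(L)\le 2(\log 2)^{-1/r}\|f\|_{Y_{p,q}}^p$. The two bounds combine to give $\|f\|_{L^{p,p;\alpha}}^p\lesssim\|f\|_{Y_{p,q}}^p$. I expect the boundary term to be the main obstacle: the naive estimate produces a logarithmically divergent integral, and the doubling inequality~\eqref{E:rozpulil-pess}, which crucially requires $g$ to be non-increasing, is what rescues the argument.

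To rule out the two reversals, I would exhibit extremal families. For $L^{p,p;\alpha}\not\hookrightarrow Y_{p,q}$, test $f^*(s)=s^{-1/p}\log^{-\beta}(eL/s)$ on $(0,L)$; direct computations of the two norms give
\begin{equation*}
    f\in L^{p,p;\alpha}\iff\beta>\alpha+\tfrac1p,\qquad
    f\in Y_{p,q}\iff\beta>\tfrac1p+\tfrac1q,
\end{equation*}
so, since $\alpha<\tfrac1q$, any $\beta\in(\alpha+\tfrac1p,\tfrac1p+\tfrac1q]$ puts $f$ in $L^{p,p;\alpha}\setminus Y_{p,q}$. For $Y_{p,q}\not\hookrightarrow L^{p,p;1/q}$, refine the family to
\begin{equation*}
    f^*(s)=s^{-1/p}\log^{-1/p-1/q}(eL/s)\bigl[\log\log(eL/s)\bigr]^{-\gamma},
\end{equation*}
suitably cut off near $s=L$ so as to be well defined and non-increasing. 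An analogous asymptotic computation, based on $\int_T^\infty u^{-1-a}(\log u)^{-b}\,du\approx a^{-1}T^{-a}(\log T)^{-b}$ for $a>0$, gives
\begin{equation*}
    f\in Y_{p,q}\iff\gamma>\tfrac1q,\qquad f\in L^{p,p;1/q}\iff\gamma>\tfrac1p,
\end{equation*}
and the hypothesis $q>p$ makes the interval $(\tfrac1q,\tfrac1p]$ non-empty, furnishing the required counterexample.
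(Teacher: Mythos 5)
Your proof is correct, and it follows a genuinely different route from the paper's. The paper substitutes $(f^*)^p=g^*$ and recognizes both inclusions as embeddings between the classical Lorentz spaces $\Lambda^1(v)$ and $\Gamma^{q/p}(w)$ with $v(t)=\log^{\alpha p}(e/t)$ and $w(t)=t^{q/p-1}$; it then reads off from the known characterizations (\cite[Theorems~10.3.12 and~10.3.17]{PKJF}) exactly for which $\alpha$ each embedding holds, which delivers the two positive embeddings and the two non-reversibility statements in one stroke. You instead give a self-contained argument: the exponential change of variables turns $\|f\|_{L^{p,p;\beta}}^p$ and $\|f\|_{Y_{p,q}}^q$ into transparent weighted integrals of a single function $\psi$, after which the first embedding is Minkowski's integral inequality (with the pleasant by-product of constant $1$), and the second is the decomposition $w(s)=w(L)+\int_s^L(-w')$ plus Fubini, H\"older with exponents $r,r'$ (where the threshold $\alpha<\frac1q$ appears exactly as the convergence condition $(\alpha p-1)r'<-1$), and the doubling estimate~\eqref{E:rozpulil-pess} for the boundary term $G(L)$ --- all of which I checked and which is sound. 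Your counterexample families are also correctly computed: the power-log family separates $L^{p,p;\alpha}$ from $Y_{p,q}$ precisely because $\alpha+\frac1p<\frac1p+\frac1q$, and the iterated-log refinement separates $Y_{p,q}$ from $L^{p,p;\frac1q}$ because $\frac1q<\frac1p$. What the paper's approach buys is brevity and automatic sharpness (the same criterion that proves an embedding disproves it outside the admissible range, so no extremal functions are needed); what yours buys is elementary self-containedness, an explicit constant, and extremizers that exhibit concretely where the scales diverge. One small point: the cutoff near $s=L$ that you rightly impose on the second test family is also needed for the first one, since $s^{-1/p}\log^{-\beta}(eL/s)$ fails to be non-increasing near $s=L$ once $\beta>\frac1p$ (which your range of $\beta$ forces); the same routine modification, which does not affect membership in either space since that is governed by the behaviour near $s=0$, repairs this.
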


\begin{proof}
To simplify the calculations, let us assume, without loss of generality, that $\mu(\RR)=1$. It follows straightaway from definitions that, given $\alpha\in(0,\infty)$, the embedding $L^{p,p;\alpha} \hookrightarrow Y_{p,q}$ holds if and only if there exists $C>0$ such that the inequality
\begin{align}\label{EQ:>C}
\left(\int_0^1\left(\int_0^tf^*(s)^p\,ds\right)^{\frac{q}{p}}\frac{dt}{t}\right)^{\frac{1}{q}} \le C \left(\int_0^1f^*(t)^p\left(\log \frac{e}{t}\right)^{\alpha p}\,dt\right)^\frac1p
\end{align}
is satisfied for every $f\in\M(\RR,\mu)$.
As we already know (see the proof of Theorem~\ref{T:boundedness-of-M}), given such $f\in\M(\RR,\mu)$, there always exists $g\in\M(\RR,\mu)$ satisfying $(f^*)^p=g^*$. Therefore,~\eqref{EQ:>C} holds if and only if
\begin{align*}
\left(\int_0^1\left(\int_0^tg^*(t)\,ds\right)^{\frac{q}{p}}\frac{dt}{t}\right)^{\frac{1}{q}} \le C \left(\int_0^1g^*(t)\left(\log \frac{e}{t}\right)^{\alpha p}\,dt\right)^\frac1p
\end{align*}
for every $g\in\M(\RR,\mu)$. In turn, on raising both sides of the last inequality to $p$, we see that~\eqref{EQ:>C} holds if and only if
\begin{align*}
\left(\int_0^1\left(\int_0^tg^*(t)\,ds\right)^{\frac{q}{p}}\frac{dt}{t}\right)^{\frac{p}{q}} \le C^{p} \int_0^1g^*(t)\left(\log \frac{e}{t}\right)^{\alpha p}\,dt
\end{align*}
for every $g\in\M(\RR,\mu)$. However, the last inequality represents the continuous embedding
\begin{equation}\label{E:embedding-classical-spaces-1}
    \Lambda^1(v)\hookrightarrow \Gamma^{\frac{q}{p}}(w),
\end{equation}
where $v(t)=(\log\frac{e}{t})^{\alpha p}$ and $w(t)=t^{\frac{q}{p}-1}$ for $t\in(0,1)$. Necessary and sufficient conditions for~\eqref{E:embedding-classical-spaces-1} to hold are known, see e.g.~\cite[Theorem~10.3.12(ii)]{PKJF}. After modifying the result to the case when $\mu(\RR)=1$, we obtain that~\eqref{E:embedding-classical-spaces-1} is true if and only if both the conditions
\begin{equation}\label{E:first-embedding-first-condition}
    \sup_{t\in(0,1)}\left(\int_{0}^{t}w(s)\,ds\right)^{\frac{p}{q}}\left(\int_{0}^{t}v(s)\,ds\right)^{-1}<\infty
\end{equation}
and
\begin{equation}\label{E:first-embedding-second-condition}
\sup_{t\in(0,1)}t\left(\int_{t}^{1}\frac{w(s)}{s^{\frac{q}{p}}}\,ds\right)^{\frac{p}{q}}\left(\int_{0}^{t}v(s)\,ds\right)^{-1}<\infty
\end{equation}
are satisfied. On inserting for $v$ and $w$ and performing a simple calculation we see that~\eqref{E:first-embedding-first-condition} amounts to
\begin{equation*}
    \sup_{t\in(0,1)}\left(\log\frac{e}{t}\right)^{-\alpha p}<\infty,
\end{equation*}
which is clearly true for any $\alpha\in(0,\infty)$, while~\eqref{E:first-embedding-second-condition} turns into
\begin{equation*}
    \sup_{t\in(0,1)}\left(\log\frac{e}{t}\right)^{\frac{p}{q}-\alpha p}<\infty,
\end{equation*}
which is satisfied if and only if $\alpha\in[\frac{1}{q},\infty)$. We have a twofold use for this result. First, on taking $\alpha=\frac{1}{q}$, we obtain that the first embedding in~\eqref{E:sandwich} is true. Second, it follows from here that the second embedding in~\eqref{E:sandwich} cannot be reversed.

Using the similar argumentation as in the first part of the proof we can verify that, given $\alpha\in(0,\infty)$, the embedding $Y_{p,q}\hookrightarrow L^{p,p;\alpha}$  is equivalent to
\begin{equation}\label{E:embedding-classical-spaces-2}
    \Gamma^{\frac{q}{p}}(w) \hookrightarrow \Lambda^1(v)
\end{equation}
with $v(t)=(\log\frac{e}{t})^{\alpha p}$ and $w(t)=t^{\frac{q}{p}-1}$ for $t\in(0,1)$. By an appropriate modification of~\cite[Theorem~10.3.17(ii)]{PKJF} (and observing that the non-degeneracy assumptions of that theorem are satisfied), we obtain that~\eqref{E:embedding-classical-spaces-2} is satisfied if and only if
\begin{equation*}
    \int_{0}^{1}
        \frac{t^{\frac{q}{q-p}+\frac{q}{p}-1}\left(\sup_{y\in(t,1)}\left(\log\frac{e}{y}\right)^{\frac{\alpha pq}{q-p}}\right)t^{\frac{q}{p}}\log\frac{1}{t}}
        {\left(t^{\frac{q}{p}}+t^{\frac{q}{p}}\log\frac{1}{t}\right)^{\frac{p}{q-p}+2}}\,dt<\infty.
\end{equation*}
Performing the calculation, we conclude that this condition is satisfied if and only if $\alpha\in(0,\frac{1}{q})$. Again, this tells us two things: the second embedding in~\eqref{E:sandwich} holds and, at the same time, the first embedding in~\eqref{E:sandwich} cannot be reversed. The proof is complete.
\end{proof}

\section{Associate space}

One of the most important problems concerning any rearrangement-invariant structure is the characterization of its associate space (see Definition~\ref{D:associate}). There is plenty of motivation for such research as duality techniques constitute generally an indispensable tool in many applications of function spaces.

In the case of $X^{\aaa}$ with general r.i.~space $X$, the task of nailing down its associate space can be very difficult, if not impossible. We shall show, however, that at least in the particular case when $X$ is a classical Lorentz space of type $\Lambda$, a characterization is possible. The idea is based on the fact that, using
an~appropriate change of variables, the norm in the associate space can be shown to be equivalent to a~power of the norm of a~certain continuous embedding between classical Lorentz spaces of types $\Gamma$ and $\Lambda$.

While the theory of spaces $X^{\aaa}$ developed in the preceding sections is restricted to the case when $X$ is an r.i.~space, for the investigation of associate spaces of the spaces $(\Lambda^{q}(w))^{\aaa}$ we can afford a
more
general approach. A restriction to the cases when $\Lambda^{q}(w)$ is an r.i.~space would narrow the field of examples to rather exceptional cases, since $\Lambda^{q}(w)$ is an r.i.~space only if it coincides with $\Gamma^{q}(w)$ - see~\cite{Sa,CGS,CPSS,CKMP,PKJF}. However, it turns out that such restriction is not necessary. The functionals $\varrho_{\Lambda^p(w)}$ are defined through the non-increasing rearrangement of a function, whence there is a
natural way of defining $(\varrho_{\Lambda^p(w)})^{\aaa}$ for any positive $\alpha$, regardless of whether the original functional is a norm or not. At the same time, in Definition~\ref{D:associate} we introduced the associate functional $\varrho'$ for any non-negative functional $\varrho$ acting on $\M_+(\RR,\mu)$.

\begin{defn}
Let $q\in(0,\infty)$, $\alpha\in(0,\infty)$ and $w\in\M_+(0,\mu(\RR))$. We then define the set $(\Lambda\sp q(w))^{\aaa}$ as the collection of all functions $f\in\M_0(\RR,\mu)$ such that $\varrho_{\Lambda\sp q(w)}^{\aaa}(|f|)<\infty$, in which
\begin{equation*}
    \varrho_{\Lambda\sp q(w)}^{\aaa}(f) = \left(\int_{0}^{\mu(\RR)}\left(\frac{1}{t}\int_0^{t}f^*(s)^{\alpha}\,ds\right)^{\frac{q}{\alpha}}w(t)\,dt\right)^{\frac{1}{q}}
\end{equation*}
for $f\in\M_+(\RR,\mu)$.
\end{defn}

Let us note that in the case when $\Lambda\sp q(w)$ is an r.i.~space the definition of $ \varrho_{\Lambda\sp q(w)}^{\aaa}$ coincides with the one given before. Now we are in a position to present the main result of this section.

\begin{thm}\label{T:classical-lambda}
Let $q\in(0,\infty)$, $\alpha\in(0,\infty)$ and let $w\in\M_+(0,b)$, where $b=\mu(\RR)$ (here $b$ can be either finite or infinite). Moreover, let $w$ satisfy the non-degeneracy conditions
\begin{equation*}
\int_0^{b} \frac{w(s)}{(s+1)^{\frac{q}{\alpha}}}\,ds<\infty,
\qquad
\int_0^1\frac{w(s)}{s^{\frac{q}{\alpha}}}\,ds
=
\infty,
\end{equation*}
and, in case when $b=\infty$, also
\begin{equation*}
\int_1^{\infty}w(s)\,ds=\infty.
\end{equation*}
Let $X=\Lambda\sp q(w)$.

\textup{(i)} If $q\in(0,1]$ and $\alpha\in(0,1]$, then
\[
\|g\|_{(X^{\aaa})'}=
\sup_{t\in(0,b)}
\frac{tg\sp{**}(t)}
{\left(\int_0\sp tw(s)\,ds+t\sp{\frac{q}{\alpha}}\int_t\sp{b}w(s)s\sp{-\frac{q}{\alpha}}\,ds\right)\sp{\frac{1}{q}}}.
\]

\textup{(ii)} If $q\in(1,\infty)$ and $\alpha\in(0,1]$, then
\[
\|g\|_{(X^{\aaa})'}\approx
\left(\int_0\sp{b}
\frac
{t\sp{\frac{q'+q}{\alpha}-1}
\sup_{y\in(t,b)}
y\sp{q'-\frac{q'}{\alpha}}
g\sp{**}(y)\sp{q'}
\int_0\sp{t}w(s)\,ds
\int_t\sp{b}w(s)s\sp{-\frac{q}{\alpha}}\,ds}
{\left(\int_0\sp tw(s)\,ds+t\sp{\frac{q}{\alpha}}\int_t\sp{b}w(s)s\sp{-\frac{q}{\alpha}}\,ds\right)\sp{q'+1}}
\,dt
\right)\sp{\frac{1}{q'}}.
\]

\textup{(iii)} If $q\in(0,1]$ and $\alpha\in(1,\infty)$, then
\[
\|g\|_{(X^{\aaa})'}\approx
\sup_{t\in(0,b)}
\frac
{tg\sp{**}(t)+t\sp{\frac{1}{\alpha}}
\left(\int_t\sp{b}g\sp{**}(s)\sp{\frac{1}{\alpha-1}}g\sp*(s)\,ds
\right)\sp{1-\frac{1}{\alpha}}}
{\left(\int_0\sp tw(s)\,ds+t\sp{\frac{q}{\alpha}}\int_t\sp{b}w(s)s\sp{-\frac{q}{\alpha}}\,ds\right)\sp{\frac{1}{q}}}.
\]

\textup{(iv)} If $q\in(1,\infty)$ and $\alpha\in(1,\infty)$, then
\[
\|g\|_{(X^{\aaa})'}\approx
\left(\int_0\sp{b}
\frac
{
\left(\left(tg\sp{**}(t)\right)\sp{\frac{\alpha}{\alpha-1}}
+
t\sp{\frac{1}{\alpha-1}}\int_t\sp{b}g\sp{**}(s)\sp{\frac{1}{\alpha-1}}g\sp*(s)\,ds\right)
\sp{\frac{q'(\alpha-1)}{\alpha}-1}
\left(tg\sp{**}(t)\right)\sp{\frac{1}{\alpha-1}}g\sp*(t)
}
{
\left(\int_0\sp tw(s)\,ds+t\sp{\frac{q}{\alpha}}\int_t\sp{b}w(s)s\sp{-\frac{q}{\alpha}}\,ds\right)\sp{q'-1}}
\,dt
\right)\sp{\frac{1}{q'}}.
\]
\end{thm}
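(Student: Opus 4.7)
The plan is to reduce the evaluation of $\|g\|_{(X^{\aaa})'}$ to the sharp constant in a continuous embedding between classical Lorentz spaces of $\Gamma$ and $\Lambda$ type, and to extract the explicit form by invoking the standard $\Gamma$--$\Lambda$ embedding theorems in the spirit of those already used in Section~\ref{S:bridge} (cf.~\cite[Theorems~10.3.12,~10.3.17]{PKJF}).

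First, the Hardy--Littlewood inequality, the rearrangement invariance of $\varrho_{\Lambda^q(w)}^{\aaa}$, and the possibility of realizing every non-increasing non-negative function on $(0,b)$ as a rearrangement of some $f\in\M(\RR,\mu)$ (non-atomicity of $(\RR,\mu)$), together reduce the computation to
\begin{equation*}
    \|g\|_{(X^{\aaa})'} = \sup\left\{\int_0^b h(s) g^*(s)\,ds : h\in\M_+(0,b) \text{ non-increasing},\ \varrho_{\Lambda^q(w)}^{\aaa}(h)\le 1\right\}.
\end{equation*}
The crucial substitution is $k=h^{\alpha}$: then $k$ is non-increasing, $h=k^{1/\alpha}$, $(h^{\alpha})^{**}(t)=k^{**}(t)$, and a direct calculation gives the clean identity $\varrho_{\Lambda^q(w)}^{\aaa}(h)=\|k\|_{\Gamma^{q/\alpha}(w)}^{1/\alpha}$. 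Consequently
\begin{equation*}
    \|g\|_{(X^{\aaa})'}=\sup\left\{\int_0^b k(s)^{1/\alpha} g^*(s)\,ds : k\in\M_+(0,b)\text{ non-increasing},\ \|k\|_{\Gamma^{q/\alpha}(w)}\le 1\right\}.
\end{equation*}
This identifies the associate norm as the sharp constant in the ``power embedding'' $\int_0^b k^{1/\alpha} g^*\,ds\le C\|k\|_{\Gamma^{q/\alpha}(w)}$ on the cone of non-increasing functions---precisely the ``power of the norm of a continuous embedding between classical Lorentz spaces of types $\Gamma$ and $\Lambda$'' announced at the opening of the section. A routine calculation using $(\chi_{(0,t)})^{**}(s)=\min(1,t/s)$ shows that $\|\chi_{(0,t)}\|_{\Gamma^{q/\alpha}(w)}^{1/\alpha}$ equals precisely the common denominator $\left(\int_0^t w(s)\,ds+t^{q/\alpha}\int_t^b w(s) s^{-q/\alpha}\,ds\right)^{1/q}$ appearing in all four cases of the theorem.

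The four cases then reflect two independent binary dichotomies. The dichotomy $\alpha\le 1$ versus $\alpha>1$ controls whether the non-linearity $k\mapsto k^{1/\alpha}$ is convex (cases~(i),~(ii)) or concave (cases~(iii),~(iv)); the dichotomy $q\le 1$ versus $q>1$ controls whether the K\"othe associate of $\Gamma^{q/\alpha}(w)$ on the cone of non-increasing functions admits a sup-type characterization via the fundamental function (cases~(i),~(iii), giving the displayed suprema) or only an integral-type characterization (cases~(ii),~(iv), giving the displayed integrals). In the convex regime $\alpha\in(0,1]$, testing with the extreme step functions $k=\lambda\chi_{(0,t)}$ captures the supremum---exactly in case~(i), up to absolute constants in case~(ii)---yielding the common denominator above together with the bare numerator $tg^{**}(t)=\int_0^t g^*(s)\,ds$.

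The concave regime $\alpha\in(1,\infty)$ is the principal technical obstacle. Here step-function tests are no longer sufficient, and one must add the correction term
\begin{equation*}
    t^{1/\alpha}\left(\int_t^b g^{**}(s)^{1/(\alpha-1)} g^*(s)\,ds\right)^{1-1/\alpha}
\end{equation*}
in the numerators of cases~(iii) and~(iv). The precise form of this term arises from H\"older's inequality with dual exponents $\alpha$ and $\alpha/(\alpha-1)$ applied to the tail $\int_t^b k(s)^{1/\alpha} g^*(s)\,ds$, after factorizing $g^*=(g^{**})^{1/\alpha}(g^{**})^{-1/\alpha}g^*$ so as to separate a $k^{**}(t)^{1/\alpha}$-type factor---to be absorbed by the $\Gamma^{q/\alpha}(w)$-constraint---from a purely $g$-dependent tail $\left(\int_t^b g^{**}(s)^{1/(\alpha-1)} g^*(s)\,ds\right)^{1-1/\alpha}$ that enters the numerator. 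Combining this upper estimate with the step-function lower bound (case~(iii), $q\le 1$) or with the full integral duality for $\Gamma^{q/\alpha}(w)$ (case~(iv), $q>1$) completes the proof. The non-degeneracy assumptions on $w$ ensure that the common denominator is strictly positive and finite for every $t\in(0,b)$, keeping all ratios well-defined; the delicate point will be verifying, in the concave regime, that the H\"older-type upper estimate and the step-function-type lower estimate match up with constants depending only on $q$ and $\alpha$ and not on $w$ or $g$.
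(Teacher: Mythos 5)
Your proposal is correct and follows essentially the same route as the paper: after reducing, via non-atomicity and the substitution $k=h^{\alpha}$, to the operator norm of the embedding $\Gamma^{\frac{q}{\alpha}}(w)\hookrightarrow\Lambda^{\frac{1}{\alpha}}(g^{*})$ (with $g^{*}$ playing the role of a weight), the four formulas are read off from the known characterizations in \cite[Theorem~10.3.17]{PKJF}, exactly as the paper does. The heuristic re-derivation of the convex/concave and sup/integral dichotomies in your later paragraphs is unnecessary once that theorem is invoked --- and, taken on its own, it would leave the ``delicate point'' you flag in the concave regime unproved --- so you should simply apply the cited result case by case rather than attempt to reprove it.
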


\begin{proof}
Our point of departure will be the definition of the associate space. We have
\[
\|g\|_{(X^{\aaa})'}
=
\sup_{\|h\|_{X^{\aaa}}\leq1}\int_0\sp{b}g\sp*(t)h\sp*(t)\,dt,
\]
that is,
\begin{equation}\label{E:associate}
\|g\|_{(X^{\aaa})'}
=
\sup_{h\not\equiv 0}
\frac{
\int_0\sp{b}g\sp*(t)h\sp*(t)\,dt}
{\left(\int_0\sp{b}\left(\frac1t\int_0\sp th\sp*(s)\sp{\alpha}\,ds\right)\sp{\frac{q}{\alpha}}w(t)\,dt\right)\sp{\frac{1}{q}}}.
\end{equation}
As already mentioned in the proof of Theorem~\ref{T:boundedness-of-M}, for every $h\in\M(\RR,\mu)$ there exists $f\in\M(\RR,\mu)$ such that $h\sp*=(f\sp*)\sp{\frac{1}{\alpha}}$. Hence, using the substitution $h\sp*\mapsto(f\sp*)\sp{\frac{1}{\alpha}}$ in~\eqref{E:associate}, we conclude that
\[
\|g\|_{(X^{\aaa})'}
=
\sup_{f\not\equiv 0}
\frac{
\int_0\sp{b}g\sp*(t)f\sp*(t)\sp{\frac{1}{\alpha}}\,dt}
{\left(\int_0\sp{b}\left(\frac1t\int_0\sp tf\sp*(s)\,ds\right)\sp{\frac{q}{\alpha}}w(t)\,dt\right)\sp{\frac{1}{q}}}.
\]
Raising both terms in the ratio to $\alpha$ we get
\[
\|g\|_{(X^{\aaa})'}
=
\left(\sup_{f\not\equiv 0}
\frac{
\left(\int_0\sp{b}g\sp*(t)f\sp*(t)\sp{\frac{1}{\alpha}}\,dt\right)\sp{\alpha}}
{\left(\int_0\sp{b}\left(\frac1t\int_0\sp tf\sp*(s)\,ds\right)\sp{\frac{q}{\alpha}}w(t)\,dt\right)\sp{\frac{\alpha}{q}}}\right)\sp{\frac{1}{\alpha}}.
\]
This, however, can be represented as
\begin{equation}\label{E:representation}
\|g\|_{(X^{\aaa})'}
=
\left(\sup_{f\not\equiv 0}
\frac
{\|f\|_{\Lambda\sp{\frac{1}{\alpha}}(g^{*})}}
{\|f\|_{\Gamma\sp{\frac{q}{\alpha}}(w)}}\right)\sp{\frac{1}{\alpha}}.
\end{equation}
The quantity in brackets at the right hand side of~\eqref{E:representation} is equal to the operator norm of the continuous embedding
\begin{equation*}
    \Gamma\sp{\frac{q}{\alpha}}(w) \hookrightarrow \Lambda\sp{\frac{1}{\alpha}}(g^{*}).
\end{equation*}

Assume that $q\in(0,1]$ and $\alpha\in(0,1]$. Then, by~\cite[Theorem~10.3.17(i)]{PKJF}, we obtain that
\begin{equation*}
    \sup_{f\not\equiv 0}
    \frac
    {\|f\|_{\Lambda\sp{\frac{1}{\alpha}}(g^{*})}}
    {\|f\|_{\Gamma\sp{\frac{q}{\alpha}}(w)}}
        =
            \sup_{t\in(0,b)}
            \frac{\left(\int_{0}^{t}g^{*}(s)\,ds\right)^{\alpha}}
            {\left(\int_0\sp tw(s)\,ds+t\sp{\frac{q}{\alpha}}\int_t\sp{b}w(s)s\sp{-\frac{q}{\alpha}}\,ds\right)\sp{\frac{\alpha}{q}}}.
\end{equation*}
Raising both sides of the last relation to $\frac{1}{\alpha}$ and plugging it into~\eqref{E:representation}, we establish the assertion in the case~(i).

The proof is analogous in all the remaining cases and it is based on parts (ii), (iii) and (iv) of~\cite[Theorem~10.3.17]{PKJF}.
\end{proof}

\begin{rem}
As a~special case of Theorem~\ref{T:classical-lambda}, we obtain the characterization of $(X^{\aaa})'$ when $X$ is a Lorentz--Zygmund space.
\end{rem}

\begin{rem}
It is worth noticing that in some particular cases one can obtain the result of Theorem~\ref{T:classical-lambda} directly. Namely, if $X^{\aaa}=X$, then obviously $(X^{\aaa})'=X'$. This applies for example when $\alpha\in(0,1)$ and $q,w$ are such that $\Lambda\sp q(w)$ is an r.i.~space, since then we can use Remark~\ref{R:alpha}.
\end{rem}

\end{document}